\documentclass[twoside,11pt,reqno]{amsart}
\usepackage{tikz}

\usepackage[bookmarks,
bookmarksnumbered,%
 colorlinks=true,%
 linkcolor=red,%
 citecolor=blue,%
 filecolor=blue,%
 urlcolor=blue,%
]
{hyperref}

\usepackage{amssymb, amsmath, latexsym, mathtools}

\makeatletter
\@namedef{subjclassname@2020}{\textup{2020} Mathematics Subject Classification}
\makeatother

\DeclareMathAlphabet{\curly}{OT1}{rsfs}{n}{it}
 
\DeclareMathOperator{\Dim}{dim}

\DeclareMathOperator{\rank}{rank}

\DeclareMathOperator{\im}{Im}

\DeclareMathOperator{\Fix}{Fix}
\DeclareMathOperator{\tr}{tr}

\DeclareMathOperator{\inc}{in}
\DeclareMathOperator{\pr}{pr}
\DeclareMathOperator{\coker}{Coker}
\DeclareMathOperator{\Sm}{Sm}

\DeclareMathOperator{\Sq}{Sq}
\DeclareMathOperator{\Bl}{Bl}
\DeclareMathOperator{\Tors}{Tors}
\DeclareMathOperator{\interior}{Int}

\DeclareMathOperator{\defi}{\mathfrak D}
\DeclareMathOperator{\Conj}{c}

\DeclareMathOperator{\Ker}{Ker}

\def\dim{\mbox{dim}}
\def\ra{\rightarrow}

\def\CC{\mathbb{C}}
\def\PP{\mathbb{P}}
\def\QQ{\mathbb{Q}}
\def\ZZ{\mathbb{Z}}
  
\def\RR{\mathbb{R}}

\def\FF{\mathbb{F}}

\def\s-{\setminus}

\def\HH{\mathbb{H}}

\newtheorem{thm}{Theorem}[section]
\newtheorem{prop}[thm]{Proposition}
\newtheorem{lemma}[thm]{Lemma}
\newtheorem{defn}[thm]{Definition}

\newtheorem{cor}[thm]{Corollary}
\newtheorem{lem}[thm]{Lemma}

\newtheorem{rmk}[thm]{Remark}

\numberwithin{equation}{section}

\begin{document}

\title[Smith--Thom deficiency bounds]
{Lower bounds for the Smith--Thom deficiency of Hilbert squares}

\author[Kharlamov]{Viatcheslav Kharlamov}

\address{
        IRMA UMR 7501, Strasbourg University, 7 rue Ren\'e-Descartes, 
        67084 Strasbourg Cedex,  FRANCE}

\email{kharlam@math.unistra.fr}

\author[R\u asdeaconu]{Rare\c s R\u asdeaconu}

\address{        
        Department of Mathematics, 1326 Stevenson Center, Vanderbilt University, 
        Nashville, TN 37240, USA\newline
        \indent ``Simion Stoilow'' Institute of Mathematics of the Romanian Academy, 
        21 Calea Grivi\c tei,  010702 Bucharest, Romania}

\email{rares.rasdeaconu@vanderbilt.edu}

\keywords{real algebraic varieties, Smith exact sequence, Smith--Thom maximality,  
Hilbert scheme of points}

\subjclass[2020]{Primary: 14P25; Secondary: 14C05, 14J99}

\begin{abstract}
We establish lower bounds for the Smith--Thom deficiency of the Hilbert square 
of a maximal nonsingular real projective variety. These bounds recover the 
known surface case and provide new obstructions to Smith--Thom maximality 
in every dimension at least two.  As applications, we prove that the Hilbert square 
of any real abelian variety of dimension at least two is not Smith--Thom maximal. 
We further show that the deficiencies of the Hilbert squares of maximal real abelian 
varieties grow exponentially with the dimension. We also obtain nonmaximality 
results for the Hilbert squares of certain Cartesian products.
\end{abstract}

\dedicatory{In memoriam V.~I.~Arnold}

\maketitle

\thispagestyle{empty}

\setcounter{tocdepth}{2}


\section{Introduction}
\label{intro}


The Smith--Thom inequality is one of the basic topological constraints on a real 
algebraic variety $X:$
$$
 \dim \, H_*(X(\RR);\FF_2)\leq \dim\, H_*(X(\CC);\FF_2).
$$
The difference
$$
 \defi(X)
 =\dim \, H_*(X(\CC);\FF_2)-\dim \, H_*(X(\RR);\FF_2)
$$
is called the \emph{Smith--Thom deficiency} of $X.$  The extremal case $\defi(X)=0$ is 
of particular interest: one then says that \(X\) is \emph{Smith--Thom maximal}, or simply 
\emph{maximal}.

The behavior of Smith--Thom maximality under natural geometric constructions is 
subtle. This is already apparent for the Hilbert square $X^{[2]},$ the canonical
resolution of the symmetric square $X^{(2)}.$ When $X$ is maximal, its symmetric 
square is always maximal \cite{franz}, while its Hilbert square need not be maximal. 
This phenomenon was detected in our previous work \cite{loss-surfaces} in the 
case of surfaces. For surfaces with vanishing odd cohomology, the results of 
\cite{loss-surfaces} have since been extended from Hilbert squares to higher 
Hilbert schemes of points in \cite{bfgk}. In higher dimensions, we obtained in 
\cite{hypersurfaces} a necessary and sufficient condition for the Smith--Thom 
maximality of the Hilbert square of nonsingular projective complete intersections 
and showed that, with a small number of exceptions, a nonsingular real projective 
complete intersection of even dimension cannot have a maximal
Hilbert square.

The purpose of the present paper is to generalize the obstruction to Smith--Thom 
maximality obtained in \cite{loss-surfaces} and \cite{hypersurfaces} and to establish 
explicit lower bounds for $\defi(X^{[2]})$ in every dimension at least $2$ in terms 
of the $\FF_2$-Betti numbers of $X(\CC)$ and $X(\RR)$. Our main estimate is the 
following:

\begin{thm}
\label{defect-mv}
Let $X$ be a maximal nonsingular real projective variety of dimension
$n\geq 2$ such that $\Tors_2 H_*(X(\CC);\ZZ)=0.$  Then
\begin{equation}
\label{defect-formula}
\defi(X^{[2]})\geq
\begin{cases}
\displaystyle
4\left(
\sum_{k=0}^{\frac{n-1}{2}}\sum_{i=0}^{k}\beta_i(X(\RR))
-\sum_{l=0}^{\frac{n-1}{2}}\sum_{j=0}^{2l}\beta_j (X(\CC))-\frac12\beta_{\rm odd}(X(\CC))
\right),\,
 \text{if $n$ is odd},\\
\displaystyle
4\left(
\sum_{k=1}^{\frac n2}\sum_{i=0}^{k-1}\beta_i(X(\RR))
-\sum_{l=1}^{\frac n2}\sum_{j=0}^{2l-1}\beta_j(X(\CC))
\right),\,
 \text{if $n$ is even}.
\end{cases}
\end{equation}
\end{thm}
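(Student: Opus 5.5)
Write $\Delta\subset X^{(2)}$ for the diagonal, $E\subset X^{[2]}$ for the exceptional divisor, and $\pi\colon X^{[2]}\to X^{(2)}$ for the Hilbert--Chow morphism. The plan is to compute $\dim H_*(X^{[2]}(\CC);\FF_2)$ exactly, and to bound $\dim H_*(X^{[2]}(\RR);\FF_2)$ from above by a Mayer--Vietoris argument. The two ingredients are:
\begin{itemize}
\item the maximality of the symmetric square $X^{(2)}$ \cite{franz};
\item the explicit structure of the real locus near the diagonal.
\end{itemize}

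\textbf{Complex side.} Since $\Tors_2 H_*(X(\CC);\ZZ)=0$, Totaro's computation of the integral cohomology of the Hilbert square (equivalently, the blow-up formula on $X\times X$ followed by taking $\ZZ/2$-invariants) gives
$$\dim H_*(X^{[2]}(\CC);\FF_2)=\tfrac12\bigl(b_*^2+b_*\bigr)+(n-1)b_*,\qquad b_*=\beta_*(X(\CC)).$$
This equals $\dim H_*(X^{(2)}(\CC);\FF_2)$ corrected by the contribution of $E(\CC)=\PP(T_X)$ minus that of $\Delta(\CC)$. The torsion-freeness hypothesis is what makes the $\FF_2$-count agree with this rational-style count.

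\textbf{Real side.} $X^{[2]}(\RR)$ consists of two pieces:
\begin{itemize}
\item the closure of pairs of distinct real points, i.e.\ the blow-up of $X(\RR)^{(2)}$ along its diagonal, which is locally $\RR\PP^{n-1}$-fibered over $X(\RR)$;
\item the pairs of complex-conjugate points, i.e.\ the quotient $(X(\CC)\setminus X(\RR))/\Conj$ compactified by the other real component of the exceptional divisor.
\end{itemize}
These two pieces are glued along $\PP(T X(\RR))$. Over a real point the real part of $E$ is $\PP(T_x X(\RR))\sqcup$ (projectivized imaginary directions), so both pieces meet $E(\RR)$ in projectivized real tangent spaces.

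\textbf{Mayer--Vietoris.} Writing $X^{[2]}(\RR)=A\cup B$ with $A\cap B$ a tubular neighbourhood of these projective bundles, I will bound
$$\dim H_*(X^{[2]}(\RR))\le \dim H_*(A)+\dim H_*(B)-\dim H_*(A\cap B)+2\operatorname{rk}(\text{connecting map}).$$
More efficiently, I will compare with $X^{(2)}(\RR)$, whose total Betti number equals $\dim H_*(X^{(2)}(\CC))$ by \cite{franz}. The real Hilbert--Chow map is a homeomorphism off the diagonal $X(\RR)$, over which it has fibre $\RR\PP^{n-1}$ on one side and a cone structure on the other. This yields
$$\defi(X^{[2]})=(\text{complex correction } (n-1)b_*-\ldots)-(\text{real correction}).$$
The real correction is computed via the Leray--Hirsch theorem for $\PP(TX(\RR))$, giving $n\beta_*(X(\RR))$, and then reduced by the ranks of the boundary homomorphisms, which are controlled by Stiefel--Whitney classes.

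\textbf{Main obstacle.} The hard step is to show that the boundary maps cannot kill enough classes. I would argue that, in degrees up to about $n/2$, the classes $h^k\cdot\pi^*(a)$ (with $h=w_1$ of the tautological bundle, $a\in H^i(X(\RR))$, $i\le k$) survive to $X^{[2]}(\RR)$ but are invisible in the complex count. Each such pair $(i,k)$ contributes deficiency $2\beta_i(X(\RR))$ from each of the two sides, which produces the factor $4$ and the nested sums $\sum_k\sum_{i\le k}\beta_i(X(\RR))$. Against this one subtracts the complex classes $h^l\cdot\pi^*(c)$ of degree $\le 2l$ that do appear on the complex side, giving the terms $\sum_l\sum_{j\le 2l}\beta_j(X(\CC))$. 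For $n$ odd, an extra middle-dimensional term $\tfrac12\beta_{\rm odd}$ arises from the symmetric-square middle-degree classes.

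Making this count precise is the key difficulty. I would do it through the Smith exact sequence, using the Kalinin filtration, with the parity distinction between $n$ odd and $n$ even entering through the range $k\le (n-1)/2$ versus $k\le n/2$.
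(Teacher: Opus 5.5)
Your outline has a genuine gap at the step you flag, and the heuristic you propose for it runs in the wrong direction. Since $\beta_*(X^{[2]})$ is known exactly, a lower bound on $\defi(X^{[2]})$ is an \emph{upper} bound on $\beta_*(X^{[2]}(\RR))$. In the Mayer--Vietoris sequence for $X^{[2]}_{\rm main}(\RR)=\HH_0\cup_{E(\RR)}\HH$, that means a \emph{lower} bound on the ranks of $\mu_k\colon H_k(E(\RR))\to H_k(\HH_0)\oplus H_k(\HH)$. So you must show that the gluing kills many classes, not that the boundary maps ``cannot kill enough''. Likewise, classes $h^k\pi^*(a)$ that survive in $X^{[2]}(\RR)$ but have no counterpart on the complex side would \emph{lower} $\defi$, not contribute to it.

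The mechanism in the paper is different. By Totaro's description (Theorem~\ref{totaro-basis}), the classes $b^j[Z^{(2)}\setminus\Delta Z]$ on the real-pair pieces $\HH_i$ restrict to linearly independent classes on $E(\RR)$. Hence $\rank\mu_k\ge\rank(\inc^k)=\sum_{i\le[k/2]}\beta_i(X(\RR))$ for $k<n$ (Corollary~\ref{inc-sym}), and this is why real Betti numbers enter with a plus sign. Complex Betti numbers enter with a minus sign because the conjugate-pair piece has $\beta_k(\HH_0)=\sum_{j\le k}\beta_j$ for $k<n$ (Lemma~\ref{dimensions-H0}); these are classes of the \emph{real} locus.

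To turn this into the stated inequality you need the exact identity of Theorems~\ref{even-rk-thm} and~\ref{odd-rk-thm}: $\defi(X^{[2]})=4\bigl(\sum_{k=0}^{n-1}\rank\mu_k-(\text{explicit Betti terms})\bigr)$. It is obtained from the following ingredients:
\begin{itemize}
\item \eqref{M-chi}, used to write $\beta_*(X^{[2]}(\RR))$ as $2\beta_{\rm odd}$ or $2\beta_{\rm even}$ plus $\chi$;
\item Poincar\'e duality on the closed $2n$-manifold $X^{[2]}(\RR)$, used to restrict to degrees $<n$;
\item the Betti numbers of $\HH_0$, $\HH_i$ and $F_i\times F_j$;
\item Lemmas~\ref{elementary-odd} and~\ref{3together-even}.
\end{itemize}
The factor $4$ comes from the two doublings in the first two items, not from ``two sides''.

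Your inputs also need fixing. The complex count \eqref{totaro-notorsion} is $\tfrac12\beta_*(\beta_*-1)+n\beta_*-\beta_{\rm odd}$, because for odd $a$ the class $a\otimes a$ is anti-invariant; you dropped the $-\beta_{\rm odd}$. Together with the $-\beta_{\rm odd}$ in \eqref{M-chi}, this term produces the $\tfrac12\beta_{\rm odd}$ for odd $n$ and cancels for even $n$; it does not come from middle-degree classes of the symmetric square. Also, over a real point $E(\RR)$ is a single $\RR\PP^{n-1}$, since the line through $iv$ is the line through $v$. It is the common boundary of $\HH_0$ and $\HH_i$, not a disjoint union with ``imaginary directions''. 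As sketched, neither the comparison with $X^{(2)}(\RR)$ nor the Kalinin filtration supplies the needed lower bound on $\rank\mu_k$.
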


For surfaces, Theorem~\ref{defect-mv} recovers \cite[Theorem~1.5(2)]{loss-surfaces}.  
In higher dimensions, the positivity of the right-hand side of \eqref{defect-formula} 
gives an explicit obstruction to the Smith--Thom maximality of $X^{[2]}.$ Unlike the 
complete-intersection criteria obtained previously in \cite{hypersurfaces}, the present 
estimates apply to arbitrary maximal nonsingular real projective varieties with no $2$-torsion 
in integral homology and give quantitative lower bounds for the deficiency.

The proof of Theorem \ref{defect-mv} combines the geometric decomposition of the 
real locus of the Hilbert square introduced in \cite{hypersurfaces}, the exact formulas for 
the Smith--Thom deficiency in Theorems \ref{even-rk-thm} and \ref{odd-rk-thm}, and 
estimates for the ranks of the corresponding Mayer--Vietoris maps.

The first application concerns real abelian varieties.  For Smith--Thom maximal abelian 
varieties, the general lower bound given by Theorem \ref{defect-mv} is already positive 
in every dimension except $2$ and $4$.  A refined analysis supplies the missing contribution 
in these exceptional dimensions and gives a uniform asymptotic estimate.

\begin{thm}
\label{tori}
If $(X,\Conj)$ is a real abelian variety of dimension $n\geq 2,$ then its Hilbert square $X^{[2]},$ 
equipped with the induced real structure, is not Smith--Thom maximal. Furthermore, for 
any sequence $(X_n,\Conj_n)$ of maximal abelian varieties with $\Dim_\CC X_n=n$,
$$
 \liminf_{n\to\infty}
 \frac{\ln\defi(X_n^{[2]})}{n}\geq 2\ln 2.
$$
\end{thm}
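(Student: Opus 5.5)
We will use the standard topology of real abelian varieties. For an abelian variety $X$ of dimension $n$ we have $X(\CC)\cong T^{2n}$. Hence $H_*(X(\CC);\ZZ)$ is torsion free and $\beta_j(X(\CC))=\binom{2n}{j}$. The real locus $X(\RR)$ is a real Lie group of dimension $n$ (it is nonempty, since the origin is real). So it is a disjoint union of $2^{s}$ copies of $T^n$ for some $0\le s\le n$. Therefore $\dim H_*(X(\RR);\FF_2)=2^{n+s}$, and $X$ is maximal exactly when $s=n$. The proof then splits into the nonmaximal case, the maximal case in dimensions $n\ne 2,4$, and the maximal case in dimensions $2$ and $4$.

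\emph{Nonmaximal $X$.} We will show that maximality of $X^{[2]}$ forces maximality of $X$. For this we use the decomposition of $X^{[2]}(\RR)$ from \cite{hypersurfaces} into three pieces:
\begin{itemize}
\item the blow-up of $X(\RR)^{(2)}$ along the diagonal;
\item the blow-up of $X(\CC)/\Conj$ along $X(\RR)$;
\item the exceptional $\RR P^{n-1}$-bundle over $X(\RR)$, along which the first two pieces are glued.
\end{itemize}
We bound $\dim H_*(X^{[2]}(\RR);\FF_2)$ from above by Mayer--Vietoris, and bound the contribution of the quotient $X(\CC)/\Conj$ by the Smith inequality for the involution $\Conj$. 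We then compare with $\dim H_*(X^{[2]}(\CC);\FF_2)$. This total is computed from the de Cataldo--Migliorini (or Nakajima/G\"ottsche type) decomposition: $\mathrm{Sym}^2H_*(X)$ plus $n-1$ shifted copies of $H_*(X)$. The expected conclusion is that the deficit $2^{2n}-2^{n+s}>0$ of $X$ propagates to a strictly positive deficit of $X^{[2]}$. If a lemma to this effect appears elsewhere in the paper, we simply invoke it.

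\emph{Maximal $X$, $n\ne2,4$.} We substitute $\beta_i(X(\RR))=2^n\binom ni$ and $\beta_j(X(\CC))=\binom{2n}{j}$ into Theorem~\ref{defect-mv}. Exchanging the order of summation, the right-hand side for $n$ odd becomes
$$4\Bigl(2^n\sum_{i\le \frac{n-1}2}\bigl(\tfrac{n+1}2-i\bigr)\binom ni-\sum_{j\le n-1}\bigl(\tfrac{n+1}2-\lceil j/2\rceil\bigr)\binom{2n}j-2^{2n-2}\Bigr),$$
and there is an analogous expression for $n$ even. Both weighted sums have leading term of order $n\,2^{2n}$, and these leading terms cancel. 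The remaining terms are of order $\sqrt n\,2^{2n}$, coming from the central binomial coefficients. The weighted sum $2^n\sum\binom ni$ beats the $\binom{2n}{j}$-sum because $\binom{n}{i}$ is spread more widely relative to $n$ than $\binom{2n}j$ is relative to $2n$. We will evaluate both sums in closed form using the identity $\sum_{i\le m}(m-i)\binom ni$ with $m\approx n/2$ and central binomial terms, and show that the difference is $\ge c\sqrt n\,4^n$ for large $n$. Positivity for the remaining small $n$ we check directly. The bound $c\sqrt n\,4^n$ yields $\liminf \ln\defi(X_n^{[2]})/n\ge 2\ln2$. The matching upper bound $\defi\le\dim H_*(X^{[2]}(\CC))=O(n4^n)$ is not needed for the stated inequality.

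\emph{Maximal $X$, $n=2,4$ (main obstacle).} In these dimensions the bound of Theorem~\ref{defect-mv} is $\le0$, so we must go back to the exact deficiency formulas of Theorems~\ref{even-rk-thm} and \ref{odd-rk-thm}. There we will sharpen the rank estimates for the Mayer--Vietoris maps in the torus case. The first thing to try is to compute these ranks explicitly, using that $X(\RR)$ consists of $2^n$ parallel translates of a subtorus. Then the inclusion maps in homology, and the classes of the exceptional $\RR P^{n-1}$-bundle (which is trivial, since the tangent bundle of a torus is trivial), are fully explicit. This should produce the extra rank defect needed to make $\defi>0$ in these two dimensions.
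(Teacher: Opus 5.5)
\emph{Verdict.} Your treatment of the nonmaximal case and of maximal $X$ with $n\neq 2,4$ is essentially sound. The case $n=2,4$, however, which is exactly where the statement goes beyond Theorem~\ref{defect-mv}, is missing an argument.

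\emph{What is fine.} For nonmaximal $X$ the paper simply invokes \cite[Theorem 1.1]{hypersurfaces}. Your Mayer--Vietoris/Smith-inequality sketch is only a heuristic, so you should cite that result rather than rely on the sketch. For maximal $X$ with $n\neq 2,4$, your substitution into Theorem~\ref{defect-mv} leads to the paper's closed form $D_n$, and $D_n\sim(\sqrt2-1)\sqrt{n/\pi}\,4^n$ as you predict. The paper avoids needing an explicit large-$n$ threshold by proving $D_{n+2}>16D_n$ and starting from $D_3=4$ and $D_6=88$.

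\emph{The gap.} In dimensions $2$ and $4$ you have $D_2=-4$ and $D_4=-24$, so everything hinges on a sharper rank estimate. You only say that computing the ranks explicitly ``should'' supply it.

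What is needed is a subspace of $\im\inc_0^*$ that meets $\im\inc^*$ trivially. Then $\rank\mu^k=\Dim(\im\inc_0^k+\im\inc^k)$ strictly exceeds the maximum used in \eqref{bound-ranks}. This is the key idea of the paper (Lemma~\ref{better-estimate}):
\begin{itemize}
\item[1)] For maximal $X$, $\HH_0\cong T^n\times K(n)$, and each component of $E(\RR)$ is $T^n\times L_i$ with $L_i\cong\RR\PP^{n-1}$.
\item[2)] The classes $V=\bigoplus_{j=1}^{n-1}H^j(T^n)\otimes H^0(K(n))$ restrict injectively (diagonally) into $\bigoplus_i H^j(T^n)\otimes H^0(L_i)$.
\item[3)] By Theorem~\ref{totaro-basis}(3) and the formulas of \cite{square}, each class in $\im\inc_i^*$ is a sum of terms $b^j\bigl(b^\ell v_\ell+b^{\ell-1}\Sq^1v_\ell+\cdots\bigr)$.
\item[4)] Since $\Sq^m$, $m\ge 1$, vanishes on $H^{\geq 1}(T^n;\FF_2)$, every positive-degree element of $\im\inc_i^*$ is divisible by $b$. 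Hence it lies in $H^*(T^n)\otimes H^{>0}(L_i)$, which meets the image of $V$ trivially.
\end{itemize}
This yields $\sum_k\rank\mu_k\ge\sum_k\rank\inc^k+2^n-2$, that is, $\defi(X^{[2]})\ge D_n+4(2^n-2)$. This gives $\defi\ge 4$ for $n=2$ and $\defi\ge 32$ for $n=4$.

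Without identifying such an independent piece of $\im\inc_0^*$, and the divisibility-by-$b$ mechanism that makes it independent of $\im\inc^*$, your plan does not prove nonmaximality in dimensions $2$ and $4$. Those are precisely the cases not covered by Theorem~\ref{defect-mv}.
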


The estimate \eqref{defect-formula} also detects nonmaximality for Cartesian products.  
We record two representative consequences.

\begin{thm}
\label{product-not-max}
Let $C$ be a nonsingular real projective curve, and let $X$ and $Y$ be
nonsingular real projective surfaces.  Assume that
$\beta_1(X)=\beta_1(Y)=0$ and that $X(\RR)$ is disconnected.  Then 
\begin{itemize}
\item[1)] \((X\times C)^{[2]}\) is not Smith--Thom maximal;
\item[2)] \((X\times Y)^{[2]}\) is not Smith--Thom maximal.
\end{itemize}
\end{thm}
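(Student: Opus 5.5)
The plan is to apply Theorem~\ref{defect-mv} directly to $Z=X\times C$ (so $n=3$) and to $Z=X\times Y$ (so $n=4$), after reducing to the case where $Z$ is maximal and checking the torsion hypothesis. Throughout, $\beta_1(X)=0$ is read as $\dim H_1(X(\CC);\FF_2)=0$.

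\textbf{Preliminary reductions.} If $X$, $Y$ or $C$ is not maximal, then $Z$ is not maximal. This follows from the Künneth formula over $\FF_2$ for both $Z(\CC)$ and $Z(\RR)$, together with the Smith--Thom inequality for each factor. In that case I would invoke the fact that maximality of $Z^{[2]}$ forces maximality of $Z$; this is in \cite{hypersurfaces}, or it can be seen from the Smith--Thom deficiency formulas of Theorems~\ref{even-rk-thm} and \ref{odd-rk-thm}, which I expect contain $\defi(Z)$ as a nonnegative summand. I expect this step to need the most care. So we may assume all factors are maximal, and hence $Z$ is maximal.

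Next comes the torsion hypothesis. Since $H_1(X(\CC);\FF_2)=0$, the group $H_1(X(\CC);\ZZ)$ has no $2$-torsion. By universal coefficients and Poincaré duality, the torsion of $H_2$ matches that of $H^3\cong H_1$, and $H_3\cong H^1$ is torsion free. So $\Tors_2H_*(X(\CC);\ZZ)=0$, and likewise for $Y$; curves are torsion free. By Künneth the products have no $2$-torsion either.

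\textbf{Bookkeeping.} Write $b=\beta_2(X(\CC))$, so the Betti numbers of $X(\CC)$ are $1,0,b,0,1$. Write $r_i=\beta_i(X(\RR))$. By $\FF_2$-Poincaré duality $r_2=r_0$, and by maximality $b=2r_0+r_1-2$. The hypothesis that $X(\RR)$ is disconnected gives $r_0\ge 2$.

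\textbf{Case 1).} Let $C$ have genus $g$ and be maximal, so $C(\RR)$ has Betti numbers $g+1,g+1$. Then:
\begin{itemize}
\item the Betti numbers of $Z(\CC)$ are $1,2g,b+1,2gb,b+1,2g,1$, so $\beta_{\rm odd}(Z(\CC))=4g+2gb$;
\item the Betti numbers of $Z(\RR)$ are $s_0=(g+1)r_0$ and $s_1=(g+1)(r_0+r_1)$ in low degrees.
\end{itemize}
Substituting into the odd case of \eqref{defect-formula} gives
$$\tfrac14\,\mathrm{RHS}=(g+1)(3r_0+r_1)-(b+3+2g)-(2g+gb).$$
Using $b=2r_0+r_1-2$, this equals $(g+1)r_0-2g-1$. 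Since $r_0\ge2$, this is at least $1$, so $\defi(Z^{[2]})\ge4$.

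\textbf{Case 2).} Let $t_i=\beta_i(Y(\RR))$ and $b'=\beta_2(Y(\CC))=2t_0+t_1-2$. Then:
\begin{itemize}
\item the Betti numbers of $Z(\CC)$ begin $1,0,b+b',0,\dots$;
\item the Betti numbers of $Z(\RR)$ begin $r_0t_0$ and $r_0t_1+r_1t_0$.
\end{itemize}
The even case of \eqref{defect-formula} gives
$$\tfrac14\,\mathrm{RHS}=2r_0t_0+r_0t_1+r_1t_0-(2+b+b').$$
After substitution this becomes
$$r_1(t_0-1)+t_1(r_0-1)+2(r_0-1)(t_0-1).$$
This is positive unless $t_0=1$ and $t_1=0$. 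That case would force $b'=0$, which is impossible for a projective surface. Hence $\defi(Z^{[2]})\ge4$ in this case too.
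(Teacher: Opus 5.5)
Your proposal is correct and matches the paper's proof: you reduce to maximal factors via \cite[Theorem 1.1]{hypersurfaces}, compute Betti numbers by K\"unneth, and substitute into Theorem~\ref{defect-mv}. This gives the same bounds $(g+1)r_0-2g-1$ and $2(r_0-1)(t_0-1)+t_1(r_0-1)+r_1(t_0-1)$, and you rule out $b'=0$ using the projective hyperplane class, as the paper does.

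Your explicit check of the $2$-torsion hypothesis is a useful addition that the paper leaves implicit. However, your suggested fallback of deducing maximality of $Z$ from Theorems~\ref{even-rk-thm} and~\ref{odd-rk-thm} cannot work, because those theorems already assume $Z$ is maximal; the citation is the right tool here. You should also state $t_0\ge 1$ explicitly: it follows from maximality, since $Y(\RR)=\varnothing$ would give $b'=-2$.
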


The paper is organized as follows.  Section~\ref{preliminaries} recalls the
Smith sequences and the elementary identities used later.  In
Section~\ref{cut-paste}, we describe the decomposition of
\(X^{[2]}(\RR)\), compute the Betti numbers of its pieces, and determine the
ranks of the relevant boundary maps.  Section~\ref{deficiency-mv-section}
contains the exact Mayer--Vietoris formulas and the proof of
Theorem~\ref{defect-mv}.  Section~\ref{examples} treats real abelian varieties 
and Cartesian products.

\begin{rmk}
{\rm 
The preceding results and their proofs extend verbatim from the real algebraic 
setting to compact complex manifolds equipped with an antiholomorphic involution, 
with the Hilbert square replaced by the corresponding Douady space.
}
\end{rmk}


\subsection*{Acknowledgments} 
This work was begun at the Max Planck Institute for Mathematics in Bonn and
completed at the University of Strasbourg.  We thank both institutions for
their hospitality and support. The second author also acknowledges the support 
of Vanderbilt University during the preparation of this work.


\subsection*{Notation and conventions}


\begin{itemize}

\item[1)] By a complex variety equipped with a real structure, we mean a pair
$(Y,\Conj),$ where $Y$ is a complex variety and
$\Conj\colon Y\ra Y$ is an antiholomorphic involution. When the
antiholomorphic involution is understood from the context, we will simply say that
$Y$ is defined over the reals.

\item[2)] Let \(Y\) be an algebraic variety defined over $\RR,$ and let
\(G=\operatorname{Gal}(\CC/\RR)\). The group $G$ is cyclic of order \(2\)
and acts on the set of complex points \(Y(\CC)\). Its nontrivial element acts
as an antiholomorphic involution, which we denote by $\Conj,$ and its fixed-point
set is precisely \(Y(\RR)\). Thus, \((Y(\CC),\Conj)\) is a complex variety
equipped with a real structure. To reconcile the notation traditionally used
for varieties equipped with real structures with that used for algebraic
varieties defined over \(\RR\), we will henceforth write \(Y\) for \(Y(\CC)\)
and retain \(Y(\RR)\) for its set of real points.

\item[3)] Unless explicitly stated otherwise, all homology and cohomology
groups are taken with coefficients in the field
\(\FF_2=\ZZ/2\ZZ\). We use \(\beta_i(\,\cdot\,)\) and
\(b_i(\,\cdot\,)\) to denote the Betti numbers with coefficients in
\(\FF_2\) and \(\QQ\), respectively. For convenience, we allow \(i\) to be
an arbitrary integer by setting \(\beta_i(\,\cdot\,)=0\) for \(i<0\).
We use \(\beta_*(\,\cdot\,)\) and \(b_*(\,\cdot\,)\) for the corresponding
total Betti numbers, while
\(\beta_{\rm odd}(\,\cdot\,)\) and
\(\beta_{\rm even}(\,\cdot\,)\) denote, respectively, the sums
$\displaystyle
\sum_{i\geq 0}\beta_{2i+1}(\,\cdot\,)
$
and 
$
\displaystyle\sum_{i\geq 0}\beta_{2i}(\,\cdot\,).
$

\end{itemize}

\bigskip


\section{Smith theory and auxiliary Betti-number identities}
\label{preliminaries}


\subsection{Smith theory and maximality}
\label{Smith.theory}


Most results cited in this section are due to P. A. Smith. Proofs can be found, e.g., in 
\cite[Chapter 3]{bredon} and \cite[Chapter 1]{dik}.

\smallskip

Throughout this section, we consider a topological space $X$ with a cellular involution 
$c:X\to X;$ that is, $X$ is a CW complex, $c$ maps cells to cells, and $c$ acts 
identically on each invariant cell.\footnote{This rather traditional condition that $X$ is 
a CW complex (or a simplicial complex, as in \cite{bredon}) can be relaxed at the cost 
of using \v Cech cohomology and assuming $X$ is a finite-dimensional locally compact 
Hausdorff space. In this paper, Smith theory is applied to smooth manifolds 
and smooth involutions, so the CW complex assumption is sufficient.} Let Let  
$F=\Fix c,\,\bar X=X/c,$ and let $\inc: F\hookrightarrow X$ and $\pr: X\ra \bar X$ 
denote the natural inclusion and projection, respectively.

\smallskip

Consider the following \emph{Smith chain complexes}:
\begin{align*}
\Sm_*(X)&=\Ker[(1+c_*)\, :S_*(X)\to S_*(X)],\\
\Sm_*(X,F)&=\Ker[(1+c_*)\, :S_*(X,F)\to S_*(X,F)].
\end{align*}
There exists a canonical identification 
$$
\Sm_*(X,F)=\im[(1+c_*)\, :S_*(X)\to S_*(X)].
$$ 
The homology groups $H_r(\Sm_*(X))$ and $H_r(\Sm_*(X,F))$ are called the 
\emph{Smith homology groups}.
The \emph{Smith sequences} are the long exact homology and cohomology 
sequences associated with the short exact sequence of complexes
\begin{equation}
\label{smith-sequence}
0\ra\Sm_*(X)\xrightarrow{\text{inclusion}}S_*(X)
\xrightarrow{1+c_*}\Sm_*(X,F)\ra 0.
\end{equation}

Moreover, $\Sm_*(X)$ canonically splits as $\Sm_*(X)=S_*(F)\oplus\im(1+c_*),$ and 
the transfer homomorphism $\tr^*:S_*(\bar X,F)\to\Sm_*(X,F)$ is an isomorphism 
\cite[Chapter 3]{bredon} (see also {\it op. cit.} for the cohomology version). In view of 
these identifications, the long exact sequences associated with  
\eqref{smith-sequence} yield:

\begin{thm}
\label{Smith.seq}
There are two exact sequences, called \emph{the homology and cohomology 
Smith sequences of $(X,c)$}, that are natural with respect to equivariant maps:
$$
\begin{gathered}
\cdots \ra H_{k+1}(\bar X,F)\xrightarrow[]{\Delta} H_k(\bar X,F)\oplus H_k(F)
  \xrightarrow{\tr^k+\inc_k} H_k(X)
 \xrightarrow{\pr_k} H_k(\bar X,F)\ra \rlap{\,},\\
\ra H^k(\bar X,F)\xrightarrow{\pr^k}H^k(X)\xrightarrow{{\tr_k}\oplus{\inc^k}} 
  H^k(\bar X,F)\oplus H^k(F)\xrightarrow{\Delta} H^{k+1}(\bar X,F)\ra\cdots  \rlap{\,.}
\end{gathered}
$$

The homology and cohomology connecting homomorphisms $\Delta$ are given by
$$
x\mapsto x\cap\omega\oplus\partial x\quad\text{and}\quad x\oplus f\mapsto
x\cup\omega+\delta f,
$$
respectively, where $\omega \in H^1(\bar X\setminus F)$ is the characteristic class of 
the double covering $X\setminus F\to\bar X\setminus F$. The images of ${\tr^*}+\inc_*$ 
and~$\pr^*$ consist of invariant classes:
$\im(\tr^*+\inc_*)\subset\Ker(1+c_*)$ and $\im(\pr^*)\subset\Ker(1+c^*)$.
\end{thm}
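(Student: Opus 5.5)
The plan is to derive both sequences from the short exact sequence of chain complexes \eqref{smith-sequence}, using the identifications stated just before the theorem, and then identify the maps.

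First, taking the long exact homology sequence of \eqref{smith-sequence} gives
$$\cdots\to H_{k+1}(\Sm_*(X,F))\xrightarrow{\partial} H_k(\Sm_*(X))\to H_k(X)\xrightarrow{(1+c_*)} H_k(\Sm_*(X,F))\to\cdots.$$
Next I substitute the two identifications. The transfer isomorphism $\tr^*\colon S_*(\bar X,F)\to\Sm_*(X,F)$ gives $H_k(\Sm_*(X,F))\cong H_k(\bar X,F)$. Under this isomorphism the map $1+c_*$ becomes $\pr_k$, since a chain $\sigma$ maps to $\sigma+c\sigma$, which is the transfer of $\pr\sigma$. For the kernel complex I use the splitting $\Sm_*(X)=S_*(F)\oplus\im(1+c_*)$ together with $\im(1+c_*)=\Sm_*(X,F)\cong S_*(\bar X,F)$. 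I have to check that this splitting is a splitting of complexes. Over $\FF_2$, with $c$ acting identically on invariant cells, an invariant chain decomposes uniquely into its part supported on $F$ and a sum of pairs $\sigma+c\sigma$. The boundary preserves $S_*(F)$, but the boundary of $\sigma+c\sigma$ can have a component in $F$. So the splitting holds as graded groups, and in homology it must be interpreted relative to $F$. This gives $H_k(\Sm_*(X))\cong H_k(\bar X,F)\oplus H_k(F)$, with the inclusion into $S_*(X)$ inducing $\tr^k+\inc_k$.

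The main obstacle is identifying the connecting homomorphism $\Delta$. Take a relative cycle $x$ in $\bar X$ modulo $F$ and lift it to a chain $\tilde x$ on $X$ with $(1+c)\tilde x=\tr x$. Then $\partial\tilde x$ lies in $\Sm_*(X)$. Its component in $S_*(F)$ is $\partial x$, the boundary as a chain in $F$. Its component in $\im(1+c)$ records where the chosen lift fails to be coherent across the double cover. That failure is measured by evaluating the covering class $\omega\in H^1(\bar X\setminus F)$, which yields $x\cap\omega$. To prove this cleanly, I would use the equivariant cell structure and choose lifts cell by cell: the coefficient of $\tilde\tau$ versus $c\tilde\tau$ in $\partial\tilde\sigma$ is exactly the cocycle $\omega$ evaluated on the edge dual to the cell pair. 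The cohomology statements follow by dualizing over $\FF_2$; equivalently, one applies $\mathrm{Hom}(-,\FF_2)$ to \eqref{smith-sequence}, giving $x\oplus f\mapsto x\cup\omega+\delta f$.

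Finally, I check naturality and invariance. Naturality holds because every construction is functorial for equivariant cellular maps. The image of $\tr+\inc$ is $c$-invariant because invariant chains have invariant homology classes. Dually, $\pr^*$ lands in the invariant cohomology because $\pr\circ c=\pr$.
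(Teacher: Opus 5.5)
Your route is the paper's: it obtains the theorem by taking the long exact sequence of \eqref{smith-sequence} and substituting the splitting of $\Sm_*(X)$ and the transfer isomorphism, and it defers the details, in particular the formula for $\Delta$, to the cited literature. Two of your steps need repair.

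\textbf{The splitting.} Your claim that $\Sm_*(X)=S_*(F)\oplus\im(1+c_*)$ holds only as graded groups is false. As written, it also undermines the conclusion you draw from it. Since $\partial$ commutes with $c$, we have $\partial(\sigma+c\sigma)=(1+c)\partial\sigma$. Every cell $\tau\subset F$ occurring in $\partial\sigma$ satisfies $c\tau=\tau$, so it occurs in $(1+c)\partial\sigma$ with coefficient $2=0$. Hence $\im(1+c_*)$ is a subcomplex and the splitting is one of chain complexes. So $H_k(\Sm_*(X))\cong H_k(F)\oplus H_k(\bar X,F)$ is immediate. The same observation makes $\Sm_*(X,F)=\im(1+c_*)$ an isomorphism of complexes.

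If $\partial(\sigma+c\sigma)$ really could have a component in $F$, you would only get a long exact sequence
$\cdots\to H_k(F)\to H_k(\Sm_*(X))\to H_k(\bar X,F)\to H_{k-1}(F)\to\cdots$.
Saying the splitting is ``interpreted relative to $F$'' does not turn that into a direct sum.

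\textbf{The cap product.} You identify the $\im(1+c_*)$-component of $\Delta$ with $x\cap\omega$ only heuristically, and this is the one nontrivial part of the theorem. There are three problems:
\begin{itemize}
\item cap products are not defined on cellular chains without a diagonal approximation;
\item ``the edge dual to the cell pair'' has no meaning in a general CW complex;
\item $\omega$ lives only on $\bar X\setminus F$, so you must first say what $x\cap\omega\in H_k(\bar X,F)$ means.
\end{itemize}

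A rigorous version goes as follows.
\begin{enumerate}
\item Reduce \eqref{smith-sequence} modulo $S_*(F)$. This gives the transfer sequence $0\to S_*(\bar X,F)\xrightarrow{\tr}S_*(X,F)\xrightarrow{\pr}S_*(\bar X,F)\to 0$. By naturality, its connecting map is the second component of $\Delta$.
\item Excise an invariant regular neighborhood $U$ of $F$. This reduces you to the free double covering of pairs $(X\setminus U,\partial U)\to(\bar X\setminus\bar U,\partial\bar U)$. There $H_*(\bar X\setminus\bar U,\partial\bar U)\cong H_*(\bar X,F)$, and capping with $\omega$ is defined.
\item Prove with singular chains that the connecting map is $\cap\,\omega$. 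Lift each singular simplex $s$ through a fixed lift of its initial vertex. Let $w(e)=0$ exactly when the lift of the edge $e$ starting at the chosen lift of $e(0)$ ends at the chosen lift of $e(1)$; then $w$ is a $1$-cocycle representing $\omega$. With these choices, $\partial\tilde s$ differs from the sum of the chosen lifts of the faces of $s$ only in the $0$-th face, by $w(s|_{[v_0,v_1]})\,(1+c)\widetilde{s|_{[v_1,\dots,v_{k+1}]}}$. Summing over the simplices of $x$ gives $\tr(x\cap w)$.
\end{enumerate}

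The rest of your proposal is fine: the $S_*(F)$-component being $\partial x$, the dualization to cohomology, naturality, and the invariance of $\im(\tr^*+\inc_*)$ and $\im\pr^*$.
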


The following immediate consequences of  Theorem \ref{Smith.seq}, which we 
state in the homology setting, have obvious counterparts in cohomology.

\begin{cor}
\label{cor-smith}
Let $(X, c)$ be a topological space equipped with a cellular involution. Then
\begin{equation}
\label{Smith-dims}
\Dim~H_*(F)+2\sum_{k}\Dim\coker({\tr^k}+\inc_k)=\Dim~H_*(X).
\end {equation}
As a consequence, we have 
\begin{equation}
\label{Smith-ineq}
\Dim~H_*(F)\le\Dim~H_*(X)\quad{\text{\rm (Smith inequality)}}.
\end{equation}
\end{cor}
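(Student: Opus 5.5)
We will derive the identity \eqref{Smith-dims} by a dimension count in the long exact homology Smith sequence of Theorem~\ref{Smith.seq}. Write $a_k=\Dim H_k(\bar X,F)$, $f_k=\Dim H_k(F)$, $x_k=\Dim H_k(X)$. All these groups are finite dimensional, since $X$ is a finite CW complex in our applications. Let $\varphi_k=\tr^k+\inc_k$, $\pi_k=\pr_k$, and let $\Delta_k\colon H_{k+1}(\bar X,F)\to H_k(\bar X,F)\oplus H_k(F)$.

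Exactness at $H_k(X)$ gives $x_k=\rank\varphi_k+\rank\pi_k$. Exactness at $H_k(\bar X,F)$, the target of $\pi_k$, gives $\rank\pi_k=a_k-\rank\Delta_{k-1}$. Exactness at $H_k(\bar X,F)\oplus H_k(F)$ gives $\rank\varphi_k=a_k+f_k-\rank\Delta_k$. Adding these,
$$x_k=2a_k+f_k-\rank\Delta_k-\rank\Delta_{k-1}.$$

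Now we eliminate $a_k$. The cokernel of $\varphi_k$ injects via $\pi_k$, so $\Dim\coker\varphi_k=x_k-\rank\varphi_k=\rank\pi_k=a_k-\rank\Delta_{k-1}$. Hence
$$2\Dim\coker\varphi_k=2a_k-2\rank\Delta_{k-1},$$
and therefore
$$x_k-f_k-2\Dim\coker\varphi_k=\rank\Delta_{k-1}-\rank\Delta_k.$$

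Summing over all $k$ turns the right-hand side into a telescoping sum. Because the complexes are finite dimensional, only finitely many terms are nonzero, and $\Delta_k=0$ for $k<0$ and for large $k$. So the total vanishes, which gives $\Dim H_*(F)+2\sum_k\Dim\coker\varphi_k=\Dim H_*(X)$.

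The Smith inequality \eqref{Smith-ineq} follows immediately, because the cokernel dimensions are nonnegative.

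The only delicate point is keeping the degree indices of $\Delta$ straight, so that the sum genuinely telescopes, together with the finiteness needed to justify the telescoping. Both are routine given the finite CW hypothesis.
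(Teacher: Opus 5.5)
Your proposal is correct and is the dimension count the paper has in mind: the paper states the corollary as an immediate consequence of the exactness of the homology Smith sequence, and your identity $x_k-f_k-2\Dim\coker\varphi_k=\rank\Delta_{k-1}-\rank\Delta_k$, summed over $k$ so that it telescopes, is that count. Assuming $X$ is a finite CW complex is harmless, because the statement already implicitly requires finite-dimensional homology that vanishes in high degrees, and this holds in the paper's applications to compact manifolds.
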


\begin{defn}
Let $(X,\Conj)$ be a topological space equipped with a cellular involution. The integer 
$$
\defi(X,\Conj)=2\sum_{p}\Dim\coker({\tr^p}+\inc_p)
$$
is called the Smith--Thom deficiency of  $(X,\Conj).$
If $\defi(X,\Conj)=0$, the topological space 
$X$ is called \emph{Smith--Thom maximal}, or simply a \emph{maximal space}, and  
$\Conj$ is called an \emph{maximal involution}.
\end{defn}
When the involution is understood from the context, it will be omitted from the notation 
of the Smith--Thom deficiency.

\smallskip 

Notice from Corollary \ref{cor-smith} that $X$ is maximal if and only if 
$\Dim~H_*(F)=\Dim~H_*(X),$ and from Theorem \ref{Smith.seq} we find the 
following characterization of maximality.

\begin{cor}
\label{maxSmith}
Let $(X,c)$ be a topological space equipped with a cellular involution. Then 
$X$ is maximal if and only if the sequence 
$$
0\ra H_{k+1}(\bar X, F)\xrightarrow{\Delta} H_{k}(\bar X, F)\oplus H_k(F)\ra H_k(X)\ra 0
$$
is exact
for every $k\geq 0$.
\qed
\end{cor}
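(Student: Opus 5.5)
The plan is to read everything off the homology Smith sequence of Theorem~\ref{Smith.seq}, together with the definition $\defi(X,\Conj)=2\sum_p\Dim\coker(\tr^p+\inc_p)$. Since every summand is a nonnegative integer, $X$ is maximal if and only if $\coker(\tr^k+\inc_k)=0$ for every $k$. In other words, maximality is equivalent to $\tr^k+\inc_k\colon H_k(\bar X,F)\oplus H_k(F)\to H_k(X)$ being surjective for all $k\ge 0$. The argument then translates this surjectivity into the stated short exact sequences, using exactness of the long sequence at each of its three types of terms.

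\emph{Forward direction.} Assume $X$ is maximal, so all $\tr^k+\inc_k$ are surjective. By exactness at $H_k(X)$, this gives $\pr_k=\Ker$-preimage of everything, i.e.\ $\im(\tr^k+\inc_k)=\Ker\pr_k=H_k(X)$, so $\pr_k=0$ for every $k$. Exactness at $H_{k+1}(\bar X,F)$ in the segment
\[
H_{k+1}(X)\xrightarrow{\pr_{k+1}}H_{k+1}(\bar X,F)\xrightarrow{\Delta}H_k(\bar X,F)\oplus H_k(F)
\]
then gives $\Ker\Delta=\im\pr_{k+1}=0$, so $\Delta$ is injective in every degree. Exactness at the middle term $H_k(\bar X,F)\oplus H_k(F)$ holds automatically, being part of the long exact sequence. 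Hence each displayed sequence is short exact.

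\emph{Converse.} If the displayed sequences are exact for every $k$, then exactness at $H_k(X)$ says precisely that $\tr^k+\inc_k$ is surjective. Thus every cokernel vanishes, so $\defi(X,\Conj)=0$ and $X$ is maximal.

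There is no genuine obstacle; the only point requiring care is indexing. Injectivity of the $\Delta$ leaving $H_{k+1}(\bar X,F)$ is controlled by $\pr_{k+1}$, not by $\pr_k$. Since the condition ranges over all $k\ge0$, the degreewise statements match up. In the bottom degree $k=0$, injectivity of $\Delta$ on $H_1(\bar X,F)$ comes from $\pr_1=0$, which is covered by the hypothesis for all degrees.
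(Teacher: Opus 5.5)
Your proof is correct and is the argument the paper intends: the corollary is stated there without proof as an immediate consequence of the homology Smith sequence (Theorem~\ref{Smith.seq}) and the definition of $\defi$. Your chain of equivalences --- maximality iff every $\tr^k+\inc_k$ is surjective iff every $\pr_k$ vanishes iff every $\Delta$ is injective and every $\tr^k+\inc_k$ is surjective --- is exactly that unpacking; the only blemish is the garbled phrase ``$\pr_k=\Ker$-preimage of everything'', which the clause right after it states correctly.
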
 

\begin{lem} \cite[Lemma 2.5]{hypersurfaces}
\label{RelativeQuotient}
If a $d$-dimensional space $(X, c)$ is maximal and $r\leq d,$ then
\begin{equation}
\label{relative-aux}
\beta_r(\bar X, F)=\sum_{k=r}^d (\beta_k(X)- \beta_k(F)).
\end{equation}
If, in addition, $d=2n$, $X$ is a smooth closed manifold, $c$ is a smooth involution, 
and each component of $F$ is $n$-dimensional, then we have
\begin{equation}
\label{relative-individual}
\beta_{r}(\bar X, F)=\sum_{k=r}^{2n}\beta_k(X), \quad\text{for every $r\ge n+1$,}
\end{equation}
and 
\begin{equation}
\label{relative-main}
\beta_*(\bar X, F)=\frac{n}2\beta_*(X).
\end{equation}
\qed
\end{lem}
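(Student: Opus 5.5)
Since this lemma is quoted from \cite[Lemma 2.5]{hypersurfaces}, I would prove it directly from Corollary~\ref{maxSmith}, which is where maximality enters. For the first formula, write the short exact sequences of Corollary~\ref{maxSmith} for each degree $k$. Exactness gives
\[
\beta_{k+1}(\bar X,F)+\beta_k(X)=\beta_k(\bar X,F)+\beta_k(F),
\]
so $\beta_k(\bar X,F)-\beta_{k+1}(\bar X,F)=\beta_k(X)-\beta_k(F)$. The pair $(\bar X,F)$ has dimension $d$, so $\beta_{d+1}(\bar X,F)=0$. Summing these telescoping identities for $k=r,\dots,d$ then gives \eqref{relative-aux}. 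This is the only place maximality is used, and the only point needing care is that the sequences hold in every degree, including top degree.

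For \eqref{relative-individual}, I would use the extra hypotheses. If $F$ is a closed manifold all of whose components have dimension $n$, then $\beta_k(F)=0$ for $k\ge n+1$. Substituting this into \eqref{relative-aux} for $r\ge n+1$ yields $\beta_r(\bar X,F)=\sum_{k=r}^{2n}\beta_k(X)$. I also need that $\bar X$ is a compact $2n$-dimensional space, so that the formula applies with $d=2n$.

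For \eqref{relative-main}, sum \eqref{relative-aux} over $r=0,\dots,2n$ and exchange the order of summation:
\[
\beta_*(\bar X,F)=\sum_{k=0}^{2n}(k+1)\bigl(\beta_k(X)-\beta_k(F)\bigr).
\]
Two further identities are needed. First, maximality gives $\beta_*(F)=\beta_*(X)$. Second, Poincaré duality on the closed manifolds $X$ (dimension $2n$) and $F$ (pure dimension $n$) gives $\beta_k(X)=\beta_{2n-k}(X)$ and $\beta_k(F)=\beta_{n-k}(F)$. By these symmetries, $\sum_k k\,\beta_k(X)=n\beta_*(X)$ and $\sum_k k\,\beta_k(F)=\frac n2\beta_*(F)$. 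Therefore
\[
\beta_*(\bar X,F)=\beta_*(X)-\beta_*(F)+n\beta_*(X)-\tfrac n2\beta_*(F)=\tfrac n2\beta_*(X),
\]
as claimed. The main obstacle, though a mild one, is keeping the duality bookkeeping straight: the sum $\sum_k k\beta_k$ is computed using mod-$2$ Poincaré duality, which needs no orientability. Applying it to $F$ requires that every component of $F$ be $n$-dimensional, which is exactly the added hypothesis.
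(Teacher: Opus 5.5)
Your proof is correct and is the standard argument behind the cited \cite[Lemma 2.5]{hypersurfaces}; the paper itself only quotes the lemma. The argument telescopes the short exact sequences of Corollary~\ref{maxSmith} using $\beta_{d+1}(\bar X,F)=0$, then uses $\beta_k(F)=0$ for $k>n$, and finally obtains \eqref{relative-main} by summing over $r$, using $\beta_*(F)=\beta_*(X)$ and mod-$2$ Poincar\'e duality on $X$ and on $F$.
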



\subsection{Auxiliary Betti-number identities}


We next recall the following elementary computations, which are used in 
the proof of Theorem \ref{defect-mv}.

\begin{lemma}  \cite[Lemma 2.7]{hypersurfaces}
\label{elementary-odd}
Let $X$ be a maximal real nonsingular projective variety of odd dimension $n$. 
Then the following relations hold:
\begin{align}
\sum_{l=0}^{\frac{n-1}{2}}\sum_{i+j=2l}\beta_i(X(\RR))\beta_j(X(\RR))
=&\frac14 \beta^2_*, \label{A-odd} \\
\sum_{l=0}^{\frac{n-1}{2}}\beta_{l}(X(\RR))=&\frac12\beta_*, \label{B-odd}\\
\sum_{l=0}^{\frac{n-1}{2}}\sum_{i=0}^{2l-1}\beta_{i}(X(\RR))=&
\frac{n-1}{4}\beta_* \label{C-odd}.
\end{align}
\qed
\end{lemma}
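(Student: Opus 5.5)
The plan is to derive all three identities from two facts about the real locus $M=X(\RR)$, which is a closed $n$-manifold with $n$ odd. The first is mod~$2$ Poincar\'e duality, $\beta_i(M)=\beta_{n-i}(M)$. The second is the vanishing of the Euler characteristic, $\chi(M)=\beta_{\rm even}(M)-\beta_{\rm odd}(M)=0$. Maximality is used only to identify $\beta_*=\beta_*(X(\RR))$ with $\beta_*(X)$, via Corollary~\ref{cor-smith}, so the identities may be read with either meaning of $\beta_*$. Throughout, write $\beta_i$ for $\beta_i(M)$ and put $m=\frac{n-1}{2}$.

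For \eqref{B-odd}: because $n$ is odd, duality $i\mapsto n-i$ is a fixed-point-free bijection from $\{0,\dots,m\}$ onto $\{m+1,\dots,n\}$. Hence $\sum_{l=0}^m\beta_l$ equals $\sum_{l=m+1}^n\beta_l$, and so it is half of $\beta_*$.

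For \eqref{A-odd}: first split the full sum as
\[
\sum_{i,j}\beta_i\beta_j=\beta_*^2
\]
and look at the pairs $(i,j)$ with $i+j$ even. Their contribution is $\beta_{\rm even}^2+\beta_{\rm odd}^2$. Since $\chi(M)=0$, we have $\beta_{\rm even}=\beta_{\rm odd}=\beta_*/2$, so this contribution is $\beta_*^2/2$. Next apply the involution $(i,j)\mapsto(n-i,n-j)$, which preserves the products $\beta_i\beta_j$ by duality. It sends total degree $2l$ to $2n-2l$. Thus even total degrees $\le n-1$ are exchanged with even total degrees $\ge n+1$, and the degree $n$ cannot occur because $n$ is odd. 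Therefore the sum over $2l\le n-1$ is exactly half of $\beta_*^2/2$, that is, $\beta_*^2/4$.

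For \eqref{C-odd}: interchange the order of summation to get $\sum_i w(i)\beta_i$. Here $w(i)$ counts the $l\le m$ with $2l-1\ge i$, so
\[
w(i)=m+1-\lceil (i+1)/2\rceil \quad\text{for } 0\le i\le n-2,
\]
and $w(i)=0$ otherwise. Using duality, rewrite the sum as $\frac12\sum_i\bigl(w(i)+w(n-i)\bigr)\beta_i$. The key step is the check that $w(i)+w(n-i)=m$ for every $0\le i\le n$, done separately for $i$ even and $i$ odd.
\begin{itemize}
\item For $i$ even, $w(i)=m-i/2$ and $w(n-i)=i/2$; the boundary case $i=0$ is consistent since $w(n)=0$.
\item For $i$ odd, the computation is symmetric.
\end{itemize}
This gives $\frac{m}{2}\beta_*=\frac{n-1}{4}\beta_*$.

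The only delicate point is this bookkeeping with ceilings and the boundary indices near $0$ and $n$. Everything else follows formally from duality and $\chi(M)=0$.
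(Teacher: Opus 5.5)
Your proof is correct. Mod~$2$ Poincar\'e duality on the closed $n$-manifold $X(\RR)$, the vanishing $\chi(X(\RR))=0$ for odd $n$ (needed only for \eqref{A-odd}), and maximality to identify $\beta_*(X(\RR))$ with $\beta_*(X)$ give all three identities, and your weight identity $w(i)+w(n-i)=\frac{n-1}{2}$ checks out, including at the boundary indices. The paper does not reprove this lemma but cites it from \cite[Lemma 2.7]{hypersurfaces}; it relies on the same ingredients (for instance, it invokes $\chi(X(\RR))=0$ in the odd-dimensional computation), so your argument follows the intended route.
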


\begin{lemma}
\label{3together-even}
\cite[Lemma 2.8]{hypersurfaces}
Let $X$ be a maximal real nonsingular projective variety of even dimension $n$. 
Then the following relations hold:
\begin{align}
\sum_{l=1}^{\frac{n}{2}}\sum_{\substack{a+b=2l-1\\a<b}}\beta_a(X(\RR))\beta_b(X(\RR))
=&\,\frac{1}{2}\beta_{\rm even}(X(\RR))\beta_{\rm odd}(X(\RR)) \,,\label{A-even} \\
\sum_{l=1}^{\frac{n}{2}}\sum_{i=0}^{2l-2}\beta_{i}(X(\RR))
=&\,\frac{n}{4}\beta_*-\frac12\beta_{\rm odd}(X(\RR)) \label{B-even}.
\end{align}
\qed
\end{lemma}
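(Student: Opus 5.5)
Both identities follow from two facts. First, $X(\RR)$ is a closed (possibly nonorientable) manifold of dimension $n$, so Poincaré duality with $\FF_2$ coefficients gives $\beta_i(X(\RR))=\beta_{n-i}(X(\RR))$. Second, maximality gives $\beta_*(X(\RR))=\beta_*(X(\CC))$, so $\beta_*$ may be read as the total Betti number of either space. Below, $\beta_i$ means $\beta_i(X(\RR))$. The plan is to rewrite each sum as a weighted sum of the $\beta_i$ (or of the products $\beta_a\beta_b$) and then symmetrize under $i\mapsto n-i$.

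For \eqref{A-even}, the left-hand side is the sum of $\beta_a\beta_b$ over unordered pairs $\{a,b\}$ with $a\neq b$, $a+b$ odd and $a+b\le n-1$. Note that $a+b$ odd forces $a\neq b$ automatically. On the other hand, $\beta_{\rm even}\beta_{\rm odd}=\sum_{a \text{ even},\, b\text{ odd}}\beta_a\beta_b$ is the sum of $\beta_a\beta_b$ over all unordered pairs with $a+b$ odd, and the odd sums range over $1,\dots,2n-1$. Consider the involution $\{a,b\}\mapsto\{n-a,n-b\}$. It preserves the parity of $a+b$ (since $n$ is even), sends $s=a+b$ to $2n-s$, and preserves the weight $\beta_a\beta_b$ by duality. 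Because $s$ is odd and $n$ is even, no pair has $s=n$. Hence the pairs with $s<n$ and those with $s>n$ carry equal total weight, and the sum over $s\le n-1$ is exactly half of $\beta_{\rm even}\beta_{\rm odd}$.

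For \eqref{B-even}, exchange the order of summation. The left-hand side becomes $S=\sum_i w(i)\beta_i$, where $w(i)=\#\{l\le n/2:\ 2l-2\ge i\}=\frac n2-\lceil i/2\rceil$ for $0\le i\le n$. This formula correctly gives $w(n-1)=w(n)=0$. Duality yields $2S=\sum_i \bigl(w(i)+w(n-i)\bigr)\beta_i$, and a direct check using that $n$ is even gives
$$w(i)+w(n-i)=\begin{cases} \frac n2, & i \text{ even},\\ \frac n2-1, & i\text{ odd}.\end{cases}$$
Thus $2S=\frac n2\beta_*-\beta_{\rm odd}(X(\RR))$, which is \eqref{B-even} after replacing $\beta_*(X(\RR))$ by $\beta_*$ via maximality.

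Neither step is a real obstacle. The only places needing care are the boundary terms in the weight computation ($i=n-1,n$, and the parity check of $w(i)+w(n-i)$) and the observation that the middle value $s=n$ cannot occur in \eqref{A-even}.
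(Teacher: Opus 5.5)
Your proof is correct. It is the natural elementary argument behind this lemma, which the paper imports from its earlier work without reproducing a proof: symmetrize using mod $2$ Poincar\'e duality $\beta_i(X(\RR))=\beta_{n-i}(X(\RR))$ on the closed $n$-manifold $X(\RR)$, and use maximality only to replace $\beta_*(X(\RR))$ by $\beta_*$ in the second identity. (For that identity, a slightly quicker way to do the bookkeeping is to pair the $l$-th and $(\frac n2+1-l)$-th inner sums, which by duality together equal $\beta_*-\beta_{2l-1}(X(\RR))$; this is the same computation as your weights $w(i)+w(n-i)$.)
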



\section{Topology of the real Hilbert square}
\label{cut-paste}


A cut-and-paste decomposition of the Hilbert square was introduced for 
nonsingular surfaces in \cite{loss-surfaces} and extended to nonsingular 
varieties of higher dimensions in \cite{hypersurfaces}. Since this decomposition 
is central to the present work, we recall it for the reader's convenience.

\smallskip

We begin by recalling the blowup model of the Hilbert square.  Let $X$ be a
nonsingular complex variety, let $\Delta\subset X\times X$ be the diagonal, and 
denote by \(\tau\) the involution that exchanges the two factors.  Since $\Delta$
is $\tau$-invariant, the involution lifts to
$$
 \Bl(\tau)\colon \Bl_{\Delta}(X\times X)\longrightarrow
 \Bl_{\Delta}(X\times X).
$$
The quotient by this lifted involution is naturally identified with $X^{[2]}$:
$$
 X^{[2]}\cong \Bl_{\Delta}(X\times X)/\Bl(\tau).
$$
The branch divisor of the resulting double covering will be denoted by
$E\subset X^{[2]}$.  The normal bundle of $\Delta$ in $X\times X$ is
canonically isomorphic to $TX$; consequently,
$$
 E\cong \PP(T^*X).
$$
Here, we follow the Grothendieck convention used in \cite{square}: $\PP(V)$
parametrizes the hyperplanes in $V$.  Thus, one may view a point of $E$ as a
point of $X$ together with a tangent direction at that point.  By contrast, a
point of $X^{[2]}\setminus E$ is an unordered pair of distinct points of
$X$.  We shall also use the fact that the normal bundle of $E$ in
$X^{[2]}$ is the square of the tautological line bundle on $\PP(T^*X)$.


\subsection{Decomposition of the real locus}


Suppose now that $X$ is defined over $\RR$, and write $\Conj$ for its real
structure.  The preceding construction is defined over $\RR$, and hence
induces a real structure on $X^{[2]}$.  Away from \(E\), an unordered pair
\(\{x,y\}\) is real precisely when it is preserved by \(\Conj\).  There are two
possibilities:
\begin{enumerate}
\item \(y=\Conj(x)\), with \(x\notin X(\RR)\);
\item both \(x\) and \(y\) belong to \(X(\RR)\).
\end{enumerate}
On \(E\), conjugation sends a point and a tangent direction to their respective
conjugates.  It follows that
$$
 E(\RR)\cong \PP_{\RR}\bigl(T^*X(\RR)\bigr).
$$

If $X(\RR)=\varnothing$, every real point of $X^{[2]}$ is represented by a
conjugate pair, and therefore
$$
 X^{[2]}(\RR)\cong X/\Conj.
$$
Assume from now on that the real locus is nonempty, and write
$$
 X(\RR)=F_1\sqcup\cdots\sqcup F_r
$$
for its decomposition into connected components.  The real points outside
\(E(\RR)\) then split as
\begin{equation}
\label{open-real-strata}
\begin{split}
 X^{[2]}(\RR)\setminus E(\RR)
 ={}&\bigl((X/\Conj)\setminus X(\RR)\bigr)\\
 &\sqcup\bigsqcup_{i=1}^{r}
       \bigl(F_i^{(2)}\setminus\Delta F_i\bigr)
 \sqcup\bigsqcup_{1\leq i<j\leq r}(F_i\times F_j).
\end{split}
\end{equation}
The three terms correspond, respectively, to conjugate nonreal pairs, pairs of
distinct points lying in the same component of \(X(\RR)\), and pairs whose
points lie in different components.

The first two types of strata acquire a common boundary when two points come
together.  More precisely, let \(\HH_0\) be the closure of
\((X/\Conj)\setminus X(\RR)\) in \(X^{[2]}(\RR)\), and, for \(1\leq i\leq r\),
let \(\HH_i\) be the closure of
\(F_i^{(2)}\setminus\Delta F_i\).  These are compact manifolds with boundary,
and
\begin{align}
\label{boundaries-H}
 \partial\HH_0&=E(\RR),
 &\interior\HH_0&\cong (X/\Conj)\setminus X(\RR), \notag\\
 \partial\HH_i&=\PP_{\RR}(T^*F_i),
 &\interior\HH_i&\cong F_i^{(2)}\setminus\Delta F_i,
 \qquad 1\leq i\leq r.
\end{align}
In particular,
\[
 E(\RR)=\bigsqcup_{i=1}^{r}\PP_{\RR}(T^*F_i),
\]
and \(\HH_i\) is attached to \(\HH_0\) along the \(i\)-th component
\(\PP_{\RR}(T^*F_i)\) of this hypersurface.  The pieces \(F_i\times F_j\), on
the other hand, do not meet \(E(\RR)\), since points lying in distinct
components cannot collide.

We have therefore obtained the decomposition
\begin{equation}
\label{connected-components}
 X^{[2]}(\RR)
 =X^{[2]}_{\mathrm{main}}(\RR)
  \bigsqcup X^{[2]}_{\mathrm{extra}}(\RR),
 \qquad
 X^{[2]}_{\mathrm{extra}}(\RR)
 =\bigsqcup_{1\leq i<j\leq r}(F_i\times F_j),
\end{equation}
where
\begin{equation}
\label{main-component-gluing}
 X^{[2]}_{\mathrm{main}}(\RR)
 =\HH_0\cup_{E(\RR)}
   \left(\bigsqcup_{i=1}^{r}\HH_i\right)
 =\bigcup_{i=0}^{r}\HH_i.
\end{equation}
Thus, $E(\RR)$ separates the main component into the $r+1$ manifolds with
boundary $\HH_0,\HH_1,\ldots,\HH_r$.  Figure~\ref{fig:real-hilbert-square-gluing}
summarizes this cut-and-paste description.

\begin{figure}[ht]
\centering
\begin{tikzpicture}[
  x=0.64cm,
  y=0.56cm,
  line cap=round,
  line join=round,
  main arc/.style={draw=black,line width=1.20pt},
  piece arc/.style={draw=black,line width=1.00pt},
  boundary/.style={draw=red!65!black,line width=1.25pt},
  omitted/.style={draw=black!65,line width=0.90pt,densely dotted},
  region label/.style={font=\footnotesize},
  boundary label/.style={font=\scriptsize,text=red!65!black}
]
  \path[fill=blue!3]
    (-6,0)
    arc[start angle=180,end angle=0,radius=6]
    arc[start angle=0,end angle=-180,radius=1]
    arc[start angle=0,end angle=180,radius=2]
    arc[start angle=0,end angle=-180,radius=1]
    arc[start angle=0,end angle=180,radius=1]
    arc[start angle=0,end angle=-180,radius=1]
    -- cycle;

  \draw[main arc]
    (-6,0) arc[start angle=180,end angle=0,radius=6];
  \draw[piece arc]
    (-6,0) arc[start angle=180,end angle=360,radius=1];
  \draw[piece arc]
    (-2,0) arc[start angle=0,end angle=180,radius=1];
  \draw[piece arc]
    (-2,0) arc[start angle=180,end angle=360,radius=1];
  \draw[piece arc]
    (4,0) arc[start angle=180,end angle=360,radius=1];

  \draw[omitted]
    (4,0) arc[start angle=0,end angle=180,radius=2];
  \draw[boundary,densely dotted] (1,0)--(3,0);

  \draw[boundary] (-6,0)--(-4,0);
  \draw[boundary] (-2,0)--(0,0);
  \draw[boundary] (4,0)--(6,0);

  \node[region label] at (-4.65,5.35)
    {\(X^{[2]}_{\mathrm{main}}(\RR)\)};
  \node[region label] at (0,3.05) {\(\HH_0\)};

  \node[region label] at (-5,-0.58) {\(\HH_1\)};
  \node[region label] at (-1,-0.58) {\(\HH_2\)};
  \node[region label] at (2,0.62) {\(\cdots\)};
  \node[region label] at (5,-0.58) {\(\HH_r\)};

  \node[boundary label,above=2pt] at (-5,0)
    {\(\PP_{\RR}(T^*F_1)\)};
  \node[boundary label,above=2pt] at (-1,0)
    {\(\PP_{\RR}(T^*F_2)\)};
  \node[boundary label,above=2pt] at (5,0)
    {\(\PP_{\RR}(T^*F_r)\)};

  \node[region label] at (9.55,1.65)
    {\(X^{[2]}_{\mathrm{extra}}(\RR)\)};
  \node[region label] at (7.05,0) {\(\sqcup\)};
  \node[region label] at (8.45,0)
    {\(\textstyle\bigsqcup_{1\leq i<j\leq r}\)};
  \draw[piece arc,fill=gray!5]
    (11.35,0) ellipse[x radius=1.75,y radius=0.88];
  \node[region label] at (11.35,0) {\(F_i\times F_j\)};
\end{tikzpicture}

\caption{The pieces \(\HH_i\) are attached to \(\HH_0\) along the components
of \(E(\RR)\). The products \(F_i\times F_j\) form separate connected
components.}
\label{fig:real-hilbert-square-gluing}
\end{figure}

For later use, set
$$
 \HH=\bigsqcup_{i=1}^{r}\HH_i;
 \qquad\text{then}\qquad
 \partial\HH=E(\RR).
$$
Let
$$
 \inc_0\colon E(\RR)=\partial\HH_0\longrightarrow\HH_0,
 \qquad
 \inc\colon E(\RR)=\partial\HH\longrightarrow\HH
$$
be the boundary inclusions.  The induced maps in degree $k$ will be denoted
by
$$
 \inc_0^k\colon H^k(\HH_0)\longrightarrow H^k(E(\RR)),
 \qquad
 \inc^k\colon H^k(\HH)\longrightarrow H^k(E(\RR)).
$$
We use $\inc_0^*$ and $\inc^*$ for the corresponding maps on total
cohomology, and use subscripts for the maps induced in homology.
Finally, we put
$$
 \mu=(\inc_0,\inc)\colon
 E(\RR)\longrightarrow \HH_0\sqcup\HH,
$$
with the same convention for the Mayer--Vietoris maps induced by $\mu$.


\subsection{Betti numbers and boundary maps}


In this subsection, we recall from \cite{hypersurfaces} the formulas for the 
Betti numbersof each $\HH_i,\, 0\le i\le r,$ and the ranks of the corresponding 
maps induced in homology and cohomology. For the rest of the paper, we 
abbreviate $\beta_k(X(\CC))$ to $\beta_k$, and similarly abbreviate  
$\beta_*(X(\CC))$ to $\beta_*$.

\begin{lem}
\label{dimensions-H0}
\cite[Lemma 3.1]{hypersurfaces}
If $X$ is a maximal $n$-dimensional real nonsingular projective variety, then 
the following formulas hold:
\begin{itemize}
\item[ 1)] $\displaystyle \beta_k(\HH_0)=\sum_{i=2n-k}^{2n} \beta_i,$ for every 
integer $0\leq k\leq n-1.$
\item[ 2)] $\displaystyle \beta_*(\HH_0)=\frac{n}{2}\beta_*.$
\end{itemize}
\qed
\end{lem}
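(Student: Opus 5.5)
The plan is to identify $\HH_0$, up to homotopy and homology, with the pair $(\bar X, X(\RR))$, where $\bar X=X/\Conj$, and then apply Lemma~\ref{RelativeQuotient}. By \eqref{boundaries-H}, $\interior\HH_0\cong(X/\Conj)\setminus X(\RR)$, and $\HH_0$ is a compact manifold with boundary $E(\RR)$. So $\HH_0$ is the complement in $\bar X$ of an open tubular neighbourhood $U$ of $X(\RR)$. Indeed, near a real point the model is $\CC^n/\Conj\supset\RR^n$. The quotient of the normal directions $i\RR^n$ by $v\mapsto -v$ is a cone over $\PP_{\RR}(T^*X(\RR))$, and blowing up the vertex gives the collar by $E(\RR)$. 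The plan is to make this identification precise locally. This gives $\HH_0\simeq \bar X\setminus U$ and $\partial\HH_0=\partial \bar U$, where $\bar U$ is a mapping cylinder neighbourhood retracting onto $X(\RR)$.

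Next, I would use Lefschetz/Poincaré--Alexander duality on the manifold with boundary $\HH_0$, which has dimension $2n$. Since we work over $\FF_2$, no orientability is needed. Duality gives $H_k(\HH_0)\cong H^{2n-k}(\HH_0,\partial\HH_0)$. By excision, $H^{2n-k}(\HH_0,\partial\HH_0)\cong H^{2n-k}(\bar X,\bar U)\cong H^{2n-k}(\bar X,X(\RR))$. Hence
\[
\beta_k(\HH_0)=\beta_{2n-k}(\bar X,X(\RR)).
\]
This holds for all $k$.

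For part 1), take $0\le k\le n-1$, so that $r=2n-k\ge n+1$. Every component of $F=X(\RR)$ is $n$-dimensional; it is nonempty by our assumption, and maximality forces pure dimension $n$ here anyway. Formula \eqref{relative-individual} then applies and gives $\beta_{2n-k}(\bar X,F)=\sum_{i=2n-k}^{2n}\beta_i$, which is the claim. The hypotheses of Lemma~\ref{RelativeQuotient} are met since $X$ is a smooth closed $2n$-manifold with a smooth maximal involution. For part 2), sum over all $k$: $\beta_*(\HH_0)=\sum_k\beta_{2n-k}(\bar X,F)=\beta_*(\bar X,F)=\frac n2\beta_*$ by \eqref{relative-main}.

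The main obstacle is the first step: making rigorous that $\HH_0$ is precisely the closure of the complement of a regular neighbourhood of $X(\RR)$ in $\bar X$, with the correct boundary. Here $\bar X$ is a topological manifold but is not smooth along $F$. I would handle this with the explicit local model of the blow-down $X^{[2]}\to X^{(2)}$ restricted to the real conjugate-pair stratum. In that model the map $\HH_0\to \bar X$ is a homeomorphism off $E(\RR)$ and collapses $E(\RR)$ onto $F$ with fibres $\PP^{n-1}_\RR$. It then suffices to note that $(\HH_0,\partial\HH_0)\to(\bar X,F)$ induces isomorphisms on relative homology. This follows by excision, since the map is a relative homeomorphism of compact pairs.
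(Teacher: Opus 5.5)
Your argument is correct and is essentially the proof behind the cited \cite[Lemma 3.1]{hypersurfaces}: Lefschetz duality gives $H_k(\HH_0)\cong H^{2n-k}(\HH_0,\partial\HH_0)$, the relative homeomorphism (or excision) gives $H^{2n-k}(\HH_0,\partial\HH_0)\cong H^{2n-k}(\bar X, X(\RR))$, and \eqref{relative-individual} and \eqref{relative-main} of Lemma~\ref{RelativeQuotient} then yield parts 1) and 2). One small correction that does not affect the argument: $\bar X$ is not a topological manifold along $X(\RR)$ when $n\ge 3$, because the normal slice is the cone over $\RR\PP^{n-1}$; all you actually use is that this neighbourhood is a mapping cylinder retracting onto $X(\RR)$, and you do establish that.
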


\begin{lem}
\label{inc-zero}
\cite[Lemma 3.2]{hypersurfaces}
If $X$ is a maximal $n$-dimensional real nonsingular projective variety, then
$$
\rank (\inc_{0}^{k})=\sum_{i=0}^{k}\beta_i
$$
for every $k<n$.
\qed
\end{lem}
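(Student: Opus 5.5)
The plan is to read $\inc_0^k$ off the long exact cohomology sequence of the pair $(\HH_0,\partial\HH_0)$:
$$
\cdots\to H^k(\HH_0,\partial\HH_0)\xrightarrow{j^k}H^k(\HH_0)\xrightarrow{\inc_0^k}H^k(E(\RR))\to H^{k+1}(\HH_0,\partial\HH_0)\to\cdots.
$$
Exactness gives $\rank(\inc_0^k)=\beta_k(\HH_0)-\rank(j^k)$. By Lemma~\ref{dimensions-H0}(1) and Poincar\'e duality on $X$, for $k\le n-1$ we have $\beta_k(\HH_0)=\sum_{i=2n-k}^{2n}\beta_i=\sum_{i=0}^{k}\beta_i$. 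The lemma is therefore equivalent to the vanishing $j^k=0$ for every $k<n$, and this is what I will prove.

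For the vanishing, I would identify $\HH_0$ with the complement in $\bar X=X/\Conj$ of an open tubular neighbourhood $N$ of $F=X(\RR)$. Its boundary is $\partial N/\Conj$, the projectivised normal bundle of $F$, and the normal bundle is $i\,TF\cong TF$, which matches $E(\RR)=\PP_\RR(T^*F)$. By excision and homotopy invariance, $H^k(\HH_0,\partial\HH_0)\cong H^k(\bar X,N)\cong H^k(\bar X,F)$. Under this identification, $j^k$ becomes the composite
$$
H^k(\bar X,F)\longrightarrow H^k(\bar X)\longrightarrow H^k(\bar X\setminus N)=H^k(\HH_0).
$$
The natural way to see that this composite vanishes is through maximality. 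By the cohomological form of Corollary~\ref{maxSmith}, the sequences $0\to H^k(X)\to H^k(\bar X,F)\oplus H^k(F)\to H^{k+1}(\bar X,F)\to0$ are exact. Hence the connecting map of the previous segment vanishes, and so $\pr^k\colon H^k(\bar X,F)\to H^k(X)$ is zero for all $k$.

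Pulling back further to the free double cover $X\setminus N\to\HH_0$ then shows that the image of $j^k$ lies in the kernel of $H^k(\HH_0)\to H^k(X\setminus N)$. By the Gysin (Smith) sequence of this free covering, that kernel is $\omega\cup H^{k-1}(\HH_0)$, where $\omega$ is the characteristic class of the covering. Since this only places $\im j^k$ inside a subspace, I would finish with a dimension count. Sum the exact sequence of the pair $(\HH_0,\partial\HH_0)$ over all degrees. Use Lefschetz duality $H^k(\HH_0,\partial)\cong H_{2n-k}(\HH_0)$, which relates $j^k$ to $j^{2n-k}$ by transposition, so that $\rank j^k=\rank j^{2n-k}$. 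Insert $\beta_*(\HH_0)=\frac n2\beta_*$ from Lemma~\ref{dimensions-H0}(2) and $\beta_*(\bar X,F)=\frac n2\beta_*$ from Lemma~\ref{RelativeQuotient}. Finally, use that $E(\RR)$ is a $\PP^{n-1}_\RR$-bundle over $F$, so by Leray--Hirsch $\beta_*(E(\RR))=n\,\beta_*(F)=n\beta_*$ (using maximality). The half-dimensional identity $\beta_*(E(\RR))=\beta_*(\HH_0)+\beta_*(\HH_0,\partial)-2\sum_k\rank j^k$ then forces $\sum_k\rank j^k=0$. So every $j^k$ vanishes, in particular for $k<n$, and the formula follows.

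The main obstacle is this last step. It requires checking that the Leray--Hirsch count and the two $\frac n2\beta_*$ counts really combine to give $\sum_k\rank j^k=0$, rather than only an inequality; the only available control on $\im j^k$ from maximality is the containment $\im j^k\subset\omega\cup H^{k-1}(\HH_0)$. If the global count leaves a defect, I would fall back on proving $j^k=0$ degree by degree for $k<n$. For that I would use the formula $\beta_k(\bar X,F)=\sum_{i\ge k}(\beta_i-\beta_i(F))$ of Lemma~\ref{RelativeQuotient}, together with the exact sequence of the pair $(\bar X,F)$, to show that $H^k(\bar X,F)\to H^k(\bar X)$ is already zero in degrees $k<n$.
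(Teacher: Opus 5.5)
Your proof is correct, and it is essentially the half-lives-half-dies argument underlying the cited lemma. The count you flag as the obstacle is an exact identity, not an inequality: with $\beta_*(E(\RR))=n\beta_*(X(\RR))=n\beta_*$ and $\beta_*(\HH_0)=\beta_*(\HH_0,\partial\HH_0)=\frac n2\beta_*$ it reads $n\beta_*=\frac n2\beta_*+\frac n2\beta_*-2\sum_k\rank j^k$, so $\inc_0^*$ is injective in every degree, and the formula for $k<n$ follows from Lemma~\ref{dimensions-H0}(1) and Poincar\'e duality, as you say; the Smith-sequence/Gysin containment $\im j^k\subset\omega\cup H^{k-1}(\HH_0)$, the transposition symmetry and the degree-by-degree fallback are not needed.
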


\smallskip

Let $F$ be a compact $C^{\infty}$ manifold of real dimension $m.$ The
complement of the diagonal in its symmetric square $F^{(2)}\setminus \Delta F$ 
is naturally identified with the interior of a smooth compact
$2m$-dimensional manifold $\HH_F$ with boundary $\PP_\RR(T^* F).$ Let
$$
\inc^k_F: H^k(F^{(2)}\setminus \Delta F)=H^k(\HH_F)\ra H^k(\partial\HH_F)
=H^k(\PP_\RR(T^* F))
$$ 
be the restriction homomorphism, and let 
$b\in H^1(F^{(2)}\setminus \Delta F)$ be the class of the double cover 
$g:(F\times F)\setminus \Delta F\ra F^{(2)}\setminus \Delta F.$

\begin{thm}
\label{totaro-basis}
\cite [Theorem 3.3]{hypersurfaces}
Let $z_0, \dots, z_s$ be a basis for $H^*(F)$ and let $Z_i$ be a closed 
pseudomanifold in $F$ that represents the class $z_i.$ For every integer 
$k\geq 0,$ we have:
\begin{itemize}
\item[ 1)] A basis for $H^k(F^{(2)}\setminus \Delta F)$  is given by the elements 
$g_*(z_i\otimes z_j)$ with $\deg z_i +\deg z_j=k$ and $i < j$, together with the
 elements $b^j[Z_i^{(2)}\setminus  \Delta Z_i],$ where $2\deg z_i+j=k,\,i\geq 0$, 
 and $0\leq j\leq m-1-\deg z_i.$
\item[ 2)] $\inc_F^k\left(g_*(z_i\otimes z_j)\right)=0$ for  all $i<j.$
\item[ 3)] The restrictions $\inc_F^k(b^j[Z_i^{(2)}\setminus  \Delta Z_i])$ satisfying 
$2\deg z_i+j=k,\,i\geq 0$  and $0\leq j\leq m-1-\deg z_i$ form a basis of   
$\im(\inc_F^k)\subseteq H^k\left(\PP_\RR(T^* F)\right).$
 \end{itemize}
\qed
\end{thm}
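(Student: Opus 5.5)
The plan is to treat $F^{(2)}\setminus\Delta F$ as the unordered configuration space of two points, and to compute its mod $2$ cohomology as the $\ZZ/2$-equivariant cohomology of $(F\times F)\setminus\Delta F$. This identification is valid because the swap involution acts freely there. We then compare it with $H^*_{\ZZ/2}(F\times F)$ through the localization (Gysin) sequence of the pair $(F\times F,\,(F\times F)\setminus\Delta F)$. This is essentially Totaro's method.

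\emph{Step 1: the equivariant cohomology of $F\times F$.} By the classical computation of Steenrod and Nakaoka, $H^*_{\ZZ/2}(F\times F)$ has an $\FF_2$-basis of two kinds of elements.
\begin{itemize}
\item Transfer classes $g_*(z_i\otimes z_j)$ with $i<j$. These are annihilated by $b$.
\item Classes $b^jP(z_i)$ for $j\ge 0$, where $P$ is the Steenrod power construction and $b$ is the generator coming from $B\ZZ/2$.
\end{itemize}
Geometrically, $P(z_i)$ is represented by the symmetric square $Z_i^{(2)}$ of a pseudomanifold representative $Z_i$ of $z_i$. Its restriction away from the diagonal is therefore $[Z_i^{(2)}\setminus\Delta Z_i]$.

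\emph{Step 2: restriction to the diagonal and the Thom isomorphism.} We use the formula
\[
P(z)|_{\Delta}=\sum_{t}\Sq^{t}z\; b^{\deg z-t}\in H^*(F)[b].
\]
We also use that transfer classes restrict to $0$ on $\Delta F\times B\ZZ/2$. Via the Thom isomorphism for the normal bundle $TF$ of the diagonal (twisted by the sign representation), the relative group $H^*_{\ZZ/2}(F\times F,\,F\times F\setminus\Delta)$ is identified with $H^{*-m}(F)[b]$. The map to $H^*_{\ZZ/2}(F\times F)$ followed by restriction to $\Delta$ is multiplication by the equivariant Euler class $\sum_t w_t(F)\,b^{m-t}$. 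This product is injective on $H^*(F)[b]$, because its leading term is $b^m$.

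It follows that the long exact sequence breaks into short exact sequences. The Gysin image is spanned, modulo lower terms, by the classes $b^jP(z_i)$ with $j\ge m-\deg z_i$. Hence the quotient $H^*(F^{(2)}\setminus\Delta F)$ has exactly the basis listed in 1). The degree bookkeeping is that $b^jP(z_i)$ has degree $2\deg z_i+j$.

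I expect this triangularity argument to be the main obstacle. One must show that the images of $b^jP(z_i)$ with $j\ge m-\deg z_i$ span exactly the image of the Gysin map, and this requires a filtration by powers of $b$.

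\emph{Step 3: restriction to the boundary.} The boundary $\PP_\RR(T^*F)$ is homotopy equivalent to a deleted tubular neighbourhood of the diagonal, taken modulo the swap. Its cohomology is
\[
H^*(F)[b]\big/\Bigl(\textstyle\sum_t w_t\,b^{m-t}\Bigr),
\]
which is a free $H^*(F)$-module with basis $1,b,\dots,b^{m-1}$.
\begin{itemize}
\item For part 2): restricting $g_*(z_i\otimes z_j)$ to the boundary gives the transfer of the pullback of $z_iz_j$ along the double cover $S(TF)\to\PP_\RR(TF)$. A transfer composed with a pullback is multiplication by $2$, so this restriction is $0$.
\item For part 3): the restriction of $b^jP(z_i)$ is $b^{j+\deg z_i}z_i$ plus terms involving lower powers of $b$ with coefficients $\Sq^t z_i$. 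Since $j+\deg z_i\le m-1$, these leading terms are distinct free-module basis elements $z_ib^{j+\deg z_i}$. A triangularity argument in the power of $b$ then shows that the restrictions are linearly independent.
\end{itemize}
Because the transfer classes die under restriction, the classes $b^jP(z_i)$ span $\im(\inc_F^k)$. Together with linear independence, their restrictions form a basis of $\im(\inc_F^k)$, which proves 3).
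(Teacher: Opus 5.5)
Your proposal is correct and takes essentially the same route as the source of this statement. The paper gives no proof of its own and only cites \cite[Theorem 3.3]{hypersurfaces}, which rests on Totaro's Haefliger-style computation in \cite{square}: the equivariant cohomology of $F\times F$, the Gysin sequence of the diagonal with monic Euler class $\sum_t w_t(F)b^{m-t}$, and the restriction formula $P(z)|_{\Delta}=\sum_t \Sq^t z\, b^{\deg z-t}$, which the paper itself uses in the proof of Lemma~\ref{better-estimate}. The triangularity step you flag closes at once by restricting to $\Delta$: the listed classes restrict to $b$-degree at most $m-1$, while a nonzero Gysin class from $y\in H^{*-m}(F)[b]$ restricts to $y\cdot e$, of $b$-degree at least $m$, so the listed classes are independent modulo the Gysin image and your dimension count gives part 1).
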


Applying the theorem to our setting yields the following corollaries.

\begin{cor}
\label{beta-square}
If $X$ is an $n$-dimensional nonsingular real projective variety, then, for 
every $i=1,\dots,r$ and $k\geq 0,$ the following formulas hold:
\begin{itemize}
\item[ 1)] $\displaystyle \beta_{2k}(\HH_i)=\sum_{\substack{a+b=2k\\a<b}} 
\beta_a(F_i)\beta_b(F_i)+\frac12\beta_k(F_i)(\beta_k(F_i)-1)
+\sum_{l=2k-n+1}^{k} \beta_l(F_i).$
\item[ 2)] $\displaystyle \beta_{2k+1}(\HH_i)=\sum_{\substack{a+b=2k+1\\a<b}} 
\beta_a(F_i)\beta_b(F_i)+\sum_{l=2k+1-n+1}^{k} \beta_l(F_i).$
\item[ 3)] $\displaystyle \beta_*(\HH_i)=\frac{1}2\beta_*(F_i)(\beta_*(F_i)-1)
+\sum_{k=0}^n (n-k)\beta_k(F_i).$
\item[ 4)] If, in addition, $X$ is maximal, then $\displaystyle \beta_*(\HH)=
\frac12\sum_{i=1}^r\beta_*^2(F_i)+\frac{n-1}{2}\beta_*.$ 
\end{itemize}
\qed
\end{cor}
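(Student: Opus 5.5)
The plan is to read off all four formulas by counting the basis elements in Theorem~\ref{totaro-basis}\,1), applied to $F=F_i$ with $m=n$. We fix a basis $z_0,\dots,z_s$ of $H^*(F_i)$ adapted to the grading, so that exactly $\beta_a(F_i)$ of the basis elements have degree $a$. Since $\HH_i$ is a compact manifold with boundary whose interior is $F_i^{(2)}\setminus\Delta F_i$, we have $H^k(\HH_i)=H^k(F_i^{(2)}\setminus\Delta F_i)$. With $\FF_2$-coefficients, $\beta_k(\HH_i)$ is therefore the number of basis elements of degree $k$ listed in Theorem~\ref{totaro-basis}\,1).

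\emph{Degree $2k$.} There are two families of basis elements.
\begin{itemize}
\item[(a)] The classes $g_*(z_i\otimes z_j)$ with $i<j$ and $\deg z_i+\deg z_j=2k$. When the degrees $a\neq b$ are different (ordered so that $a<b$), they contribute $\beta_a(F_i)\beta_b(F_i)$. When $a=b=k$, they contribute $\binom{\beta_k(F_i)}{2}=\tfrac12\beta_k(F_i)(\beta_k(F_i)-1)$.
\item[(b)] The classes $b^j[Z^{(2)}\setminus\Delta Z]$ with $\deg z=l$ and $j=2k-2l$. The constraint $0\le j\le n-1-l$ becomes $l\le k$ and $l\ge 2k-n+1$. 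This gives the sum $\sum_{l=2k-n+1}^{k}\beta_l(F_i)$; the convention $\beta_l=0$ for $l<0$ absorbs the case $2k-n+1<0$.
\end{itemize}
This proves 1).

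\emph{Degree $2k+1$.} The argument is the same, except that two degrees summing to $2k+1$ can never be equal, so there is no diagonal binomial term. For family (b), $j=2k+1-2l$ and the constraint $0\le j\le n-1-l$ gives $2k+2-n\le l\le k$. This proves 2).

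\emph{Total Betti number.} For 3), I would add up the two families directly. Family (a) consists of all unordered pairs of distinct basis elements, which number $\tfrac12\beta_*(F_i)(\beta_*(F_i)-1)$. For family (b), each basis element $z$ of degree $l$ produces the classes with $j=0,\dots,n-1-l$, that is, $n-l$ classes. Summing over the basis gives $\sum_k (n-k)\beta_k(F_i)$.

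\emph{Sum over components under maximality.} For 4), sum 3) over $i=1,\dots,r$. By maximality, $\sum_i\beta_*(F_i)=\beta_*(X(\RR))=\beta_*$. The linear part then becomes
$$
n\beta_*-\sum_k k\,\beta_k(X(\RR))-\tfrac12\beta_*.
$$
The only nontrivial input is the identity $\sum_k k\,\beta_k(X(\RR))=\tfrac n2\beta_*(X(\RR))$. This follows from mod~$2$ Poincar\'e duality $\beta_k=\beta_{n-k}$ on the closed $n$-manifold $X(\RR)$, whose components are all $n$-dimensional: pairing $k$ with $n-k$ gives $\sum_k k\beta_k=\sum_k (n-k)\beta_k$, hence each equals $\tfrac n2\beta_*$. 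Substituting yields $\tfrac{n-1}{2}\beta_*$, as claimed.

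The only step needing care is the index bookkeeping for the $b^j$-family in 1) and 2), specifically converting $0\le j\le n-1-\deg z$ into the stated summation ranges. The rest is direct counting.
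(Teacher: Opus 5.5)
Your proposal is correct and matches the paper's intended argument: the corollary is read off by counting the basis elements in Theorem~\ref{totaro-basis}\,1) with $m=n$, and item 4) follows by summing 3) over the components using maximality and mod~$2$ Poincar\'e duality on the closed $n$-manifold $X(\RR)$. Your conversions of $0\le j\le n-1-\deg z$ into the stated summation ranges in 1) and 2) are also right.
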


\begin{cor}
\label{inc-sym}
If $X$ is an  $n$-dimensional real projective manifold, then
\begin{equation*}
\rank(\inc^m)=\sum_{[\frac{m}{2}]\geq k\geq m-n+1}\beta_k(X(\RR)),
\end{equation*}
for every $m\in\{0,\dots,  2n\}.$
\qed
\end{cor}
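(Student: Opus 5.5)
The plan is to reduce the computation to Theorem~\ref{totaro-basis}, applied separately to each connected component $F_i$ of $X(\RR)$, and then to count basis elements. Since $X$ is nonsingular of complex dimension $n$, every component $F_i$ is a compact $C^\infty$ manifold of real dimension $n$. By \eqref{boundaries-H}, $\HH_i$ is the manifold $\HH_{F_i}$ of Theorem~\ref{totaro-basis}, with boundary $\PP_\RR(T^*F_i)$.

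First I split the map by components. We have $\HH=\bigsqcup_{i=1}^r\HH_i$ and $\partial\HH=E(\RR)=\bigsqcup_{i=1}^r\PP_\RR(T^*F_i)$, and the inclusion $\inc$ respects this decomposition. Hence in each degree $m$ the map $\inc^m$ is the direct sum $\bigoplus_{i=1}^r\inc^m_{F_i}$. It follows that $\rank(\inc^m)=\sum_{i=1}^r\rank(\inc^m_{F_i})$.

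Next I compute each summand. By Theorem~\ref{totaro-basis}(3), applied with the manifold dimension equal to $n$, the rank of $\inc^m_{F_i}$ equals the number of admissible pairs $(z,j)$. Here $z$ runs over a basis of $H^*(F_i)$, and a pair is admissible when
\[
2\deg z+j=m \quad\text{and}\quad 0\le j\le n-1-\deg z.
\]
The restrictions indexed by these pairs form a basis of the image, so they are linearly independent and counting them gives the rank exactly. Parts (1) and (2) of that theorem are not needed, since they only show that the classes $g_*(z_i\otimes z_j)$ contribute nothing to the image.

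Now I rewrite the admissibility conditions. Put $k=\deg z$, so $j=m-2k$ is determined by $k$. The condition $j\ge 0$ becomes $k\le m/2$, that is $k\le[\frac m2]$. The condition $j\le n-1-k$ becomes $m-2k\le n-1-k$, that is $k\ge m-n+1$. For each $k$ in this range there are exactly $\beta_k(F_i)$ basis elements of degree $k$. Therefore
\[
\rank(\inc^m_{F_i})=\sum_{[\frac m2]\ge k\ge m-n+1}\beta_k(F_i).
\]

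Finally I sum over $i$ and use $\beta_k(X(\RR))=\sum_{i=1}^r\beta_k(F_i)$, which gives the stated formula for every $m\in\{0,\dots,2n\}$. The only point needing care is this index bookkeeping: one must check that the two constraints on $j$ translate exactly into the stated range of $k$, and that no degree $k$ is counted twice, which holds because $j$ is determined by $k$. There is no real obstacle beyond that.
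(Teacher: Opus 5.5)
Your proposal is correct and is essentially the paper's own argument: the paper obtains this corollary directly from Theorem~\ref{totaro-basis}. You do the same, splitting $\inc^m$ over the components $\HH_i=\HH_{F_i}$, counting the basis elements of $\im(\inc^m_{F_i})$ given by part~3) with $j=m-2k$, converting $0\le j\le n-1-k$ into $m-n+1\le k\le[\frac m2]$, and summing over components via $\beta_k(X(\RR))=\sum_i\beta_k(F_i)$.
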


We conclude this section by recalling the following proposition, which  generalizes 
Proposition 3.2 in \cite{loss-surfaces}. Its proof extends verbatim to higher dimensions 
and will be omitted.

\begin{prop}
\label{calculus}
If $X$ is an $n$-dimensional nonsingular projective variety defined over $\RR$,
then  
\begin{itemize}
\item[ 1)] The following identity holds 
\begin{equation}
\label{M-chi}
\chi(X^{[2]}(\RR))=\frac12\beta_*-\beta_{\rm odd}+\frac12\chi(X(\RR))^2-\chi(X(\RR)).
\end{equation}
\item[ 2)] If $\Tors_2 H_*(X;\ZZ)=0,$ then
the following formula holds
\begin{equation}
\label{totaro-notorsion}
\beta_*(X^{[2]})=\frac12\beta_*(\beta_*-1)+n\beta_*-\beta_{\rm odd}.
\end{equation}
\item[ 3)] If $\Tors_2 H_*(X;\ZZ)\neq 0,$ then the following inequality holds
\begin{equation}
\label{totaro-torsion}
\beta_*(X^{[2]})\geq \frac12\beta_*(\beta_*-1)+n\beta_*-\beta_{\rm odd}.
\end{equation}
\end{itemize}
\qed
\end{prop}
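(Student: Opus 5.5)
The plan is to compute each side of \eqref{M-chi} additively, using the decomposition \eqref{connected-components}--\eqref{main-component-gluing}, and to obtain \eqref{totaro-notorsion} and \eqref{totaro-torsion} from the blowup model $X^{[2]}\cong \Bl_\Delta(X\times X)/\Bl(\tau)$ together with Totaro's analysis of the cohomology of this quotient \cite{square}.

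\emph{Part 1.} Since $E(\RR)$ is a closed codimension-one submanifold, the Mayer--Vietoris sequence for \eqref{main-component-gluing} gives
$$
\chi(X^{[2]}(\RR))=\chi(\HH_0)+\sum_{i=1}^r\chi(\HH_i)-\chi(E(\RR))+\sum_{i<j}\chi(F_i)\chi(F_j).
$$
The steps are then as follows.
\begin{itemize}
\item[(a)] Each component of $E(\RR)$ is an $\RR P^{n-1}$-bundle over $F_i$, so $\chi(E(\RR))=\chi(\RR P^{n-1})\chi(X(\RR))$. This vanishes: for $n$ even, $\chi(\RR P^{n-1})=0$; for $n$ odd, $X(\RR)$ is a closed odd-dimensional manifold, so $\chi(X(\RR))=0$.
\item[(b)] A compact manifold with boundary has the Euler characteristic of its interior, since the two are homotopy equivalent. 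By \eqref{boundaries-H}, $\chi(\HH_0)=\chi((X/\Conj)\setminus X(\RR))$. This is a free $\ZZ/2$-quotient, so it equals $\tfrac12\bigl(\chi(X)-\chi(X(\RR))\bigr)$, and $\chi(X)=\beta_*-2\beta_{\rm odd}$.
\item[(c)] Similarly, $\chi(\HH_i)=\chi(F_i^{(2)}\setminus\Delta F_i)=\tfrac12\bigl(\chi(F_i)^2-\chi(F_i)\bigr)$, computed from the free quotient of $F_i\times F_i\setminus\Delta$.
\end{itemize}
Summing (c) over $i$ together with the cross terms $\sum_{i<j}\chi(F_i)\chi(F_j)$ gives $\tfrac12\chi(X(\RR))^2-\tfrac12\chi(X(\RR))$. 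Adding (b) then yields \eqref{M-chi}.

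\emph{Parts 2 and 3.} First I will compute the rational count. By the blowup formula,
$$
H^*(\Bl_\Delta(X\times X);\QQ)\cong H^*(X\times X;\QQ)\oplus\bigoplus_{j=1}^{n-1}H^{*-2j}(X;\QQ).
$$
The lifted involution acts trivially on the exceptional summands, which contribute $(n-1)b_*$. On $H^*(X\times X)$, its invariants have dimension $\tfrac12(b_*^2+b_{\rm even}-b_{\rm odd})$. Therefore
$$
b_*(X^{[2]})=\tfrac12 b_*(b_*-1)+n b_*-b_{\rm odd}.
$$
When $\Tors_2H_*(X;\ZZ)=0$, we have $\beta_i=b_i$. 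It then suffices to know that $H^*(X^{[2]};\ZZ)$ also has no $2$-torsion, so that $\beta_*(X^{[2]})=b_*(X^{[2]})$. This is Totaro's theorem \cite{square}, which constructs an explicit $\ZZ_{(2)}$-basis out of classes $g_*(z_i\otimes z_j)$, diagonal classes $[Z_i^{(2)}]$, and powers of the class of $E$, in the spirit of Theorem~\ref{totaro-basis}.

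For Part 3, the rational computation alone does not suffice, because the right-hand side of \eqref{totaro-torsion} involves the $\FF_2$-numbers $\beta_i$ rather than the $b_i$. Instead, I will run the same basis construction directly with $\FF_2$ coefficients. Starting from a basis $z_0,\dots,z_s$ of $H^*(X;\FF_2)$, Totaro's classes still give $\tfrac12\beta_*(\beta_*-1)+n\beta_*-\beta_{\rm odd}$ linearly independent elements of $H^*(X^{[2]};\FF_2)$. Independence is detected by restricting to $E$, as in parts 2)--3) of Theorem~\ref{totaro-basis}, and by pulling back to $\Bl_\Delta(X\times X)$. Torsion only enters when showing that these classes span, which is the step where equality may fail.

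The main obstacle is precisely this $\FF_2$ independence and spanning argument, that is, establishing the torsion-free statement and the lower bound rather than merely the rational count. I would follow Totaro's argument, using the Smith sequence (Theorem~\ref{Smith.seq}) for $\Bl(\tau)$ with fixed locus $E$ to control the $\FF_2$-cohomology of the quotient.
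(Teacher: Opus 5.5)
Your argument for 1) is fine. It is the cut-and-paste Euler characteristic count over $\HH_0$, the $\HH_i$, $E(\RR)$ and the $F_i\times F_j$; the paper itself gives no proof here and defers to the surface case. Part 2) is fine modulo the citation of Totaro.

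The gap is in 3), at the claim that for an arbitrary $\FF_2$-basis $z_i$ of $H^*(X;\FF_2)$, Totaro's classes still give $\frac12\beta_*(\beta_*-1)+n\beta_*-\beta_{\rm odd}$ independent elements of $H^*(X^{[2]};\FF_2)$, with torsion only affecting spanning. With $\FF_2$-coefficients some of these classes do not exist on $X^{[2]}$. To see this, write $X^{[2]}=B\cup N$, with $N$ a closed tubular neighbourhood of $E$ and $B$ the compactified configuration space, so that $\partial B=\partial N=\PP_{\RR}(TX)$.
\begin{itemize}
\item By Theorem~\ref{totaro-basis} with $m=2n$, and the restriction formula used in the proof of Lemma~\ref{better-estimate}, the image $L_1$ of $H^*(B)$ in $H^*(\PP_{\RR}(TX))$ is spanned by the $b^jP(z_i)$. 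Here $P(z)=\sum_k b^{\deg z-k}\Sq^kz$ and $0\le j\le 2n-1-\deg z_i$.
\item The image $L_2$ of $H^*(N)=H^*(E)$ is the $H^*(X)$-span of $1,b^2,\dots,b^{2n-2}$, because the mod $2$ tautological class of $E\to X$ pulls back to $b^2$.
\item Both images have dimension $n\beta_*$. Using $\beta_*(B)=\frac12\beta_*(\beta_*-1)+n\beta_*$ and $\beta_*(E)=n\beta_*$, Mayer--Vietoris gives $\beta_*(X^{[2]})=\frac12\beta_*(\beta_*-1)+2\dim(L_1\cap L_2)$.
\item The elements $b^jP(z_i)$ with $j+\deg z_i$ odd have odd-in-$b$ parts with distinct leading terms $b^{j+\deg z_i}z_i$. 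Hence $\dim(L_1\cap L_2)\le\#\{(i,j): j+\deg z_i\ \text{even}\}=\frac12(n\beta_*-\beta_{\rm odd})$.
\item Equality holds when $\Sq^1=0$ on $H^*(X;\FF_2)$. Then $\Sq^{2i+1}=\Sq^1\Sq^{2i}=0$, so the remaining $b^jP(z_i)$ contain only even powers of $b$. This already proves 2) directly.
\end{itemize}
When $\Sq^1\neq0$, however, $b^jP(z_i)$ with $j+\deg z_i$ even picks up the odd term $b^{j+\deg z_i-1}\Sq^1z_i+\cdots$. The corresponding class on $B$ then glues with nothing on $E$.

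So the missing spanning step is not the only problem: 3) cannot be proved, because the inequality goes the other way in general. Take an Enriques surface, with $\beta_0,\dots,\beta_4=1,1,12,1,1$; the right-hand side of \eqref{totaro-torsion} is then $150$. Here $\Sq^1a=a^2\neq0$ for the generator $a\in H^1(X;\FF_2)$, and $\Sq^1u\neq0$ for some $u\in H^2(X;\FF_2)$.
\begin{itemize}
\item In degree $3$, $L_1$ is spanned by $b^3$ and $b^2a+b\,a^2$, and it meets $L_2$ trivially.
\item In degree $4$, $L_1\cap L_2=\{P(x):\Sq^1x=0\}$, which has dimension $11$.
\item Adding $1$ and $b^2$ gives $\dim(L_1\cap L_2)=13$ instead of $15$, hence $\beta_*(X^{[2]})=146<150$.
\end{itemize}
What holds for every $X$ is $\beta_*(X^{[2]})\le\frac12\beta_*(\beta_*-1)+n\beta_*-\beta_{\rm odd}$. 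Equality holds whenever $\Sq^1$ vanishes on $H^*(X;\FF_2)$, in particular when $\Tors_2H_*(X;\ZZ)=0$. Item 3) plays no role elsewhere in the paper, where this torsion assumption is always imposed.
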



\section{The Smith--Thom deficiency via Mayer--Vietoris}
\label{deficiency-mv-section}


Let $X$ be a Smith--Thom maximal nonsingular projective variety such that  
$\Tors_2 H_*(X;\ZZ)=0$. 

\smallskip

The geometric decomposition of the real locus of the Hilbert square $X^{[2]}$
allows us to compute its Smith--Thom deficiency using the Mayer--Vietoris sequence,
following an approach similar to that used in
\cite{loss-surfaces, hypersurfaces}. The results depend on the parity of
$n=\Dim_\CC X\geq 2$.


\subsection{The even-dimensional formula} 


We assume here that the dimension $n$ of $X$ is even. Using
\eqref{M-chi} and \eqref{totaro-notorsion}, the deficiency of 
$X^{[2]}$ can be computed as follows:
\begin{align*}
\defi(X^{[2]})=&\, \beta_*(X^{[2]})-\beta_*(X^{[2]}(\RR))\notag\\ 
=&\, \beta_*(X^{[2]})-2\beta_{\rm odd}(X^{[2]}(\RR)) - \chi(X^{[2]}(\RR))\notag\\ 
=&\, \frac12\beta_*^2+(n-1)\beta_* - 2\beta_{\rm odd}(X^{[2]}(\RR)) 
-\frac12 \chi(X(\RR))^2+\chi(X(\RR)).
\end{align*}
Since $X$ is maximal, we have $\chi(X(\RR))=\beta_*-2\beta_{\rm odd}(X(\RR))$, 
and we find 
\begin{align}
\label{deficiency-square-even}
\defi(X^{[2]})=&\, n\beta_*+2\beta_*\beta_{\rm odd}(X(\RR))-2\beta_{\rm odd}^2(X(\RR))
-2\beta_{\rm odd}(X(\RR))\notag \\
&- \, 2\beta_{\rm odd}(X^{[2]}(\RR))\notag \\
=&\, n\beta_*+2\beta_{\rm even}(X(\RR))\beta_{\rm odd}(X(\RR))
-2\beta_{\rm odd}(X(\RR))\notag \\
&- \, 2\beta_{\rm odd}(X^{[2]}(\RR)).
\end{align}
Notice now from (\ref{connected-components}) that 
\begin{align}
\label{sum-betti-odd}
\beta_{\rm odd}(X^{[2]}(\RR))=&\, \beta_{\rm odd}(X^{[2]}_{\rm main}(\RR))
+\beta_{\rm odd}(X^{[2]}_{\rm extra}(\RR))\notag \\
=&\, 2 \sum_{l=1}^{\frac{n}2}\left(\beta_{2l-1}(X^{[2]}_{\rm main}(\RR))
+\beta_{2l-1}(X^{[2]}_{\rm extra}(\RR))\right).
\end{align}
To compute $\beta_{2l-1}(X^{[2]}_{\rm main}(\RR))$ for every integer $l$ 
such that $1\leq l \leq \frac{n}2,$ we use the Mayer--Vietoris sequence
\begin{align*}
\cdots  \ra H_{2l-1}(E(\RR))\xrightarrow{\mu_{2l-1}}\bigoplus_{i=0}^r & H_{2l-1}(\HH_i)
\ra H_{2l-1}(X^{[2]}_{\text{main}}(\RR))\\
\ra &H_{2l-2}(E(\RR))\xrightarrow{\mu_{2l-2}} \dots
\end{align*}
which gives rise to the short exact sequence
$$
0\ra\coker (\mu_{2l-1})\ra H_{2l-1}(X^{[2]}_{\text{main}}(\RR))\ra \Ker(\mu_{2l-2})\ra 0.
$$
Therefore, for any $1\leq l\leq n,$ we have  
\begin{align}
\label{master-main-even}
\beta_{2l-1}(X^{[2]}_{\text{main}}(\RR))=&\, \Dim\coker (\mu_{2l-1})+\Dim\Ker(\mu_{2l-2})\notag \\
=&\, 
\sum_{i=0}^r\beta_{2l-1}(\HH_i)+\beta_{2l-2}(E(\RR))\notag \\
&\, -\rank(\mu_{2l-1})-\rank(\mu_{2l-2}).
\end{align}
Using the decomposition of $X^{[2]}_{\rm extra}(\RR)$ in~\eqref{connected-components}, 
we also notice that
\begin{align}
\label{extra-odd}
\beta_{2l-1}(X^{[2]}_{\rm extra}(\RR))=&\, \sum_{1\leq s<t\leq r} \beta_{2l-1}(F_s\times F_t) \notag \\
=&\, \sum_{i+j=2l-1}\sum_{1\leq s<t\leq r}\beta_i(F_s)\beta_j(F_t).
\end{align}

For every integer $l$ such that $1\leq l\leq \frac{n}{2},$  a direct computation 
using (\ref{extra-odd}) and Corollary \ref{beta-square} yields

\begin{align}
\label{single-quadratic}
&\, \beta_{2l-1}(X^{[2]}_{\rm extra}(\RR))+ \sum_{i=1}^r\beta_{2l-1}(\HH_i)\notag \\
=&\sum_{i+j=2l-1}\sum_{1\leq s<t\leq r}\beta_i(F_s)\beta_j(F_t) 
+\sum_{i=1}^r\sum_{\substack{a+b=2l-1\\a<b}} \beta_a(F_i)\beta_b(F_i)+
 \sum_{i=1}^r\sum_{c=0}^{l-1}\beta_c(F_i)\notag \\
=&\sum_{\substack{a+b=2l-1\\a<b}}\beta_a(X(\RR))\beta_b(X(\RR))  
+ \sum_{c=0}^{l-1}\beta_c(X(\RR)).
\end{align}
Notice from Lemma \ref{dimensions-H0} that for every integer $1\leq l\leq \frac{n}{2}$ we have
\begin{equation}\label{betaH0}
\beta_{2l-1}(\HH_0)=\sum_{i=2n-2l+1}^{2n} \beta_i=\sum_{j=0}^{2l-1} \beta_j.
\end{equation}

We now infer that

\begin{align}
\label{beta_odd-mu}
\beta_{2l-1}(X^{[2]}(\RR))=&\sum_{j=0}^{2l-1} \beta_j 
+\sum_{\substack{a+b=2l-1\\a<b}}\beta_a(X(\RR))\beta_b(X(\RR)) \notag\\ 
 &+ \sum_{c=0}^{l-1}\beta_c(X(\RR))+\sum_{i=0}^{2l-2}\beta_i(X(\RR))\notag \\
 &\, -\rank(\mu_{2l-1})-\rank(\mu_{2l-2}).
\end{align}
Therefore, we obtain
\begin{align}
\label{sum_beta_odd_mu}
\frac12\beta_{\rm odd}(X^{[2]}(\RR))=&\sum_{l=1}^{\frac{n}{2}}\sum_{j=0}^{2l-1} \beta_j 
+\sum_{l=1}^{\frac{n}{2}}\sum_{\substack{a+b=2l-1\\a<b}}\beta_a(X(\RR))\beta_b(X(\RR))
+\sum_{l=1}^{\frac{n}{2}} \sum_{c=0}^{l-1}\beta_c(X(\RR)) \notag\\ 
 &+\sum_{l=1}^{\frac{n}{2}}\sum_{i=0}^{2l-2}\beta_i(X(\RR))
 -\sum_{l=1}^{\frac{n}{2}}\left(\rank(\mu_{2l-1})+\rank(\mu_{2l-2})\right)\notag \\
 =& \sum_{l=1}^{\frac{n}{2}}\sum_{j=0}^{2l-1} \beta_j +\sum_{l=1}^{\frac{n}{2}} \sum_{c=0}^{l-1}\beta_c(X(\RR)) 
 +\frac{n}{4}\beta_*-\frac12\beta_{\rm odd}(X(\RR))\notag \\ 
 + &\,\frac{1}{2}\beta_{\rm even}(X(\RR))\beta_{\rm odd}(X(\RR))
 -\sum_{k=0}^{n-1}\rank(\mu_{k}).
\end{align}

As a consequence, we find:

\begin{thm}
\label{even-rk-thm}
Let $X$ be a nonsingular real projective variety of even dimension $n\geq 2.$ 
If $X$ is Smith--Thom maximal and $\Tors_2 H_*(X;\ZZ)=0,$ then the 
Smith--Thom deficiency of its Hilbert square is as follows:
\begin{equation}
\label{deficiency-even_mu}
\defi(X^{[2]})=
\, 4\left( \sum_{k=0}^{n-1}\rank(\mu_{k})- \sum_{l=1}^{\frac{n}{2}}\sum_{j=0}^{2l-1} \beta_j 
-\sum_{l=1}^{\frac{n}{2}} \sum_{c=0}^{l-1}\beta_c(X(\RR))\right).
\end{equation}
\end{thm}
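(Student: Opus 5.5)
The proof is a direct substitution: we insert \eqref{sum_beta_odd_mu} into the expression \eqref{deficiency-square-even} for $\defi(X^{[2]})$ and check that everything except the rank terms and the two double sums cancels.

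First I would double-check the reduction \eqref{sum-betti-odd}. It expresses $\beta_{\rm odd}(X^{[2]}(\RR))$ as twice the sum of the odd Betti numbers $\beta_{2l-1}$ over $1\le l\le \frac n2$. This uses Poincaré duality on the closed $2n$-manifold $X^{[2]}(\RR)$: since $2n$ is even, degree $2l-1$ pairs with degree $2n-2l+1$, which is again odd. Hence the odd Betti numbers in degrees below $n$ account for exactly half of $\beta_{\rm odd}$.

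Next I would verify the collapse from \eqref{beta_odd-mu} to the final line of \eqref{sum_beta_odd_mu}. Two ingredients are needed.
\begin{itemize}
\item Lemma~\ref{3together-even}: \eqref{A-even} handles the quadratic double sum, and \eqref{B-even} handles $\sum_l\sum_{i\le 2l-2}\beta_i(X(\RR))$.
\item The rank sum re-indexes as $\sum_{l=1}^{n/2}\bigl(\rank\mu_{2l-1}+\rank\mu_{2l-2}\bigr)=\sum_{k=0}^{n-1}\rank\mu_k$.
\end{itemize}
A word on \eqref{beta_odd-mu} itself. It should really be read for $X^{[2]}_{\rm main}$ plus $X^{[2]}_{\rm extra}$. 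It combines \eqref{master-main-even}, \eqref{single-quadratic}, \eqref{betaH0}, and the identity $\beta_{2l-2}(E(\RR))=\sum_{i=0}^{2l-2}\beta_i(X(\RR))$. That identity follows from the Leray--Hirsch theorem for the $\RR P^{n-1}$-bundle $E(\RR)=\PP_\RR(T^*X(\RR))$, which is valid in degrees $2l-2\le n-2<n-1$.

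Finally I would substitute into \eqref{deficiency-square-even}, i.e.\ into $2\beta_{\rm odd}(X^{[2]}(\RR))=4\cdot\tfrac12\beta_{\rm odd}(X^{[2]}(\RR))$. This gives the following terms:
\begin{itemize}
\item $n\beta_*$ cancels against $4\cdot \frac n4\beta_*$;
\item $2\beta_{\rm even}(X(\RR))\beta_{\rm odd}(X(\RR))$ cancels against $4\cdot\frac12\beta_{\rm even}(X(\RR))\beta_{\rm odd}(X(\RR))$;
\item $-2\beta_{\rm odd}(X(\RR))$ cancels against $-4\cdot(-\frac12\beta_{\rm odd}(X(\RR)))$.
\end{itemize}
What remains is $4\bigl(\sum_k\rank\mu_k-\sum_l\sum_{j\le 2l-1}\beta_j-\sum_l\sum_{c\le l-1}\beta_c(X(\RR))\bigr)$, which is \eqref{deficiency-even_mu}.

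The main obstacle is the bookkeeping in \eqref{single-quadratic}. There one has to confirm that the lower limit $2k+1-n+1$ in Corollary~\ref{beta-square} is $\le 0$ for $2k+1=2l-1\le n-1$, so that the linear term is exactly $\sum_{c=0}^{l-1}\beta_c(F_i)$. One also has to check that the mixed products over $s<t$ together with the diagonal products with $a<b$ recombine into $\sum_{a<b}\beta_a(X(\RR))\beta_b(X(\RR))$. This recombination works because $a+b$ is odd, so $a\ne b$ and both orderings appear among the pairs $s<t$.
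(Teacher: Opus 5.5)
Your proposal is correct and follows the paper's proof, which is exactly the substitution of \eqref{sum_beta_odd_mu} into \eqref{deficiency-square-even}; your cancellation of the $n\beta_*$, $\beta_{\rm even}(X(\RR))\beta_{\rm odd}(X(\RR))$ and $\beta_{\rm odd}(X(\RR))$ terms is right. The extra checks you supply are exactly the steps the paper leaves implicit, and each is correct: Poincar\'e duality for \eqref{sum-betti-odd} (with no odd middle degree since $n$ is even), the Leray--Hirsch count $\beta_{2l-2}(E(\RR))=\sum_{i\le 2l-2}\beta_i(X(\RR))$, and the index and recombination bookkeeping in \eqref{single-quadratic}.
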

\proof
Substitute (\ref {sum_beta_odd_mu}) into (\ref{deficiency-square-even}).
\qed

\begin{cor}
\label{rank-defect-even}
If $X$ is a nonsingular Smith--Thom maximal variety of even dimension $n=\Dim X\geq 2$ and 
$\Tors_2 H_*(X;\ZZ)=0,$ then $X^{[2]}$ is maximal if and only if
$$
\sum_{k=0}^{n-1}\rank(\mu_{k}) = 
\sum_{l=1}^{\frac{n}{2}}\sum_{j=0}^{2l-1} \beta_j +
\sum_{l=1}^{\frac{n}{2}} \sum_{c=0}^{l-1}\beta_c(X(\RR)).
$$
\qed
\end{cor}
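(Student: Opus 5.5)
The statement is Corollary~\ref{rank-defect-even}, and it follows directly from Theorem~\ref{even-rk-thm}. Under its hypotheses ($X$ maximal, $n$ even, $\Tors_2 H_*(X;\ZZ)=0$), Theorem~\ref{even-rk-thm} gives the exact identity \eqref{deficiency-even_mu}:
\[
\defi(X^{[2]})=4\Bigl(\sum_{k=0}^{n-1}\rank(\mu_k)-\sum_{l=1}^{\frac n2}\sum_{j=0}^{2l-1}\beta_j-\sum_{l=1}^{\frac n2}\sum_{c=0}^{l-1}\beta_c(X(\RR))\Bigr).
\]

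By Corollary~\ref{cor-smith} and the definition of the deficiency, $X^{[2]}$ is maximal exactly when $\defi(X^{[2]})=0$. Since the prefactor $4$ is nonzero, this happens precisely when the bracketed expression vanishes. That vanishing is the displayed equality of the corollary, so the two conditions are equivalent in both directions.

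The only point to check is that Theorem~\ref{even-rk-thm} is an equality, not merely an inequality. Its derivation uses only exact ingredients:
\begin{itemize}
\item formulas \eqref{M-chi} and \eqref{totaro-notorsion}, the latter requiring the torsion hypothesis;
\item the exact Mayer--Vietoris count \eqref{master-main-even};
\item Lemmas~\ref{dimensions-H0}, \ref{3together-even} and Corollary~\ref{beta-square}, which require maximality of $X$.
\end{itemize}
All of these hypotheses are assumed in the corollary. I expect no obstacle here: the argument is a one-line deduction, and the substantive work was already done in establishing \eqref{deficiency-even_mu}.
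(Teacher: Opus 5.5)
Your proposal is correct and matches the paper's approach: the corollary is an immediate consequence of the exact identity \eqref{deficiency-even_mu} in Theorem~\ref{even-rk-thm}, since $X^{[2]}$ is maximal precisely when $\defi(X^{[2]})=0$, that is, when the bracketed expression vanishes. You were also right to check that the theorem is an equality rather than an inequality; the paper states it as an exact formula under the same hypotheses as the corollary.
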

\begin{rmk}
\label{surface-case}
{\rm Since  $\rank(\mu_{0})=\beta_0(X(\RR))$,   Corollary \ref{rank-defect-even} 
shows that, for surfaces, the Hilbert square $X^{[2]}$ is maximal if and only if 
$\rank (\mu_1)= 1+ \beta_1$. Note also that, as shown in \cite[Lemma 6.1]{loss-surfaces}, 
$\rank(\mu_1)= 1+ \beta_1$ whenever $X$ is a maximal surface and $X(\RR)$ is 
connected. Combining these two observations, we proved in \cite{loss-surfaces} that, 
for a maximal surface, the conditions $\beta_0(X(\RR))=1$ and 
$\Tors_2 H_*(X;\ZZ)=0$ are sufficient for the maximality of $X^{[2]}$.}
\end{rmk}


\subsubsection{Proof of Theorem \ref{defect-mv} for even-dimensional varieties}


Recall that $\displaystyle \mu =(\inc_{0},\inc) : E(\RR)\ra \HH_0\sqcup \HH.$ 
The induced map in cohomology is 
$$
\mu^k:H^k(\HH_0) \oplus H^k(\HH)\ra H^k(E(\RR)) 
$$  
given by $\mu^k=\inc_0^k+\inc^k,$ where the maps $\inc_0^k$ and $\inc^k$ are 
the restrictions 
$$
\inc_{0}^k:  H^k(\HH_0)\ra H^k(E(\RR))\quad{\text{and}}
\quad  \inc^{k}: H^k(\HH)\ra H^k(E(\RR)),
$$ 
respectively.
In particular, we find $\im(\inc^{k})\subseteq\im(\mu^k),$ and 
$\im(\inc_0^{k})\subseteq\im(\mu^k),$ 
for every $0\leq k\leq n-1.$ Since all groups are finite-dimensional vector spaces 
over $\FF_2,$ the maps $\mu_k$ and $\mu^k$ are dual and therefore have the 
same rank. Hence, 
\begin{equation}
\label{bound-ranks}
\rank (\mu_k)\geq \max \{\rank (\inc_0^k),\rank (\inc^k)\}.
\end{equation}
Substituting the formulas from Lemma \ref{inc-zero} and Corollary \ref{inc-sym} 
into the deficiency formula \eqref{deficiency-even_mu} and using \eqref{bound-ranks}
gives the following lower bound for the Smith--Thom deficiency of $X^{[2]}$:
\begin{align*}
\frac14\defi(X^{[2]})\geq &\,
 \max\left\{\sum_{k=0}^{n-1}\sum_{i=0}^{k}\beta_i\, ,\, 
 \sum_{k=0}^{n-1}\sum_{i=0}^{[k/2]}\beta_i(X(\RR))\right\} 
  - \sum_{l=1}^{\frac{n}{2}}\sum_{j=0}^{2l-1} \beta_j 
  -\sum_{l=1}^{\frac{n}{2}} \sum_{c=0}^{l-1}\beta_c(X(\RR))\\
\geq &\, \sum_{k=0}^{n-1}\sum_{i=0}^{[k/2]}\beta_i(X(\RR))
- \sum_{l=1}^{\frac{n}{2}}\sum_{j=0}^{2l-1} \beta_j 
-\sum_{l=1}^{\frac{n}{2}} \sum_{c=0}^{l-1}\beta_c(X(\RR))\\
=&\,\sum_{l=1}^{\frac{n}{2}} \sum_{c=0}^{l-1}\beta_c(X(\RR)) 
- \sum_{l=1}^{\frac{n}{2}}\sum_{j=0}^{2l-1} \beta_j.
\end{align*}
\qed

In particular, we obtain the following criterion:

\begin{cor}
\label{RealDefectEstimate-even}
Let $X$ be a nonsingular maximal variety with $n=\Dim\, X$ even
and $\Tors_2 H_*(X;\ZZ)=0.$ If
\begin{equation*}
\sum_{k=1}^{\frac{n}2} \sum_{i=0}^{k-1}\beta_i(X(\RR))>
 \sum_{l=1}^{\frac{n}{2}}\sum_{j=0}^{2l-1} \beta_j,
\end{equation*}
then $X^{[2]}$ is not maximal. \qed
\end{cor}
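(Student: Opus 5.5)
The corollary follows directly from the even-dimensional case of Theorem~\ref{defect-mv}, proved just above, together with the definition of maximality. No new geometric input is needed.

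First, I record the lower bound obtained in the proof above:
$$
\tfrac14\defi(X^{[2]})\ \geq\ \sum_{l=1}^{\frac n2}\sum_{c=0}^{l-1}\beta_c(X(\RR))-\sum_{l=1}^{\frac n2}\sum_{j=0}^{2l-1}\beta_j .
$$
The only computational ingredient in its last line is the regrouping identity
$$
\sum_{k=0}^{n-1}\sum_{i=0}^{[k/2]}\beta_i(X(\RR))=2\sum_{l=1}^{\frac n2}\sum_{c=0}^{l-1}\beta_c(X(\RR)).
$$
This holds because, for $n$ even, the indices $k\in\{0,\dots,n-1\}$ pair up as $k=2l-2,\,2l-1$ with $1\le l\le \frac n2$, and both members of each pair have $[k/2]=l-1$. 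I would double-check this pairing, since it is the one place where the parity of $n$ is used. After renaming the summation index $k$ in the hypothesis as $l$, the first double sum in the bound is exactly the left-hand side of the hypothesis.

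Under the stated hypothesis the right-hand side is strictly positive, so $\defi(X^{[2]})>0$. By the definition of Smith--Thom deficiency, and equivalently by Corollary~\ref{cor-smith}, $X^{[2]}$ is maximal exactly when $\defi(X^{[2]})=0$. Hence $X^{[2]}$ is not maximal.

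I expect no real obstacle. The only points needing care are the index bookkeeping described above and the check that the standing assumptions of Theorem~\ref{even-rk-thm} hold: $X$ maximal, $\Tors_2 H_*(X;\ZZ)=0$, and $n\ge 2$ even. These are exactly the hypotheses of the corollary.
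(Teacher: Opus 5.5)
Your proposal is correct and follows the paper's argument. The corollary is simply the positivity case of the even-dimensional bound $\frac14\defi(X^{[2]})\ge\sum_{l=1}^{\frac{n}{2}}\sum_{c=0}^{l-1}\beta_c(X(\RR))-\sum_{l=1}^{\frac{n}{2}}\sum_{j=0}^{2l-1}\beta_j$, which comes from Theorem~\ref{even-rk-thm}, $\rank(\mu_k)\ge\rank(\inc^k)$ and Corollary~\ref{inc-sym}. Your pairing $k=2l-2,\,2l-1$ is exactly the regrouping in the last line of that proof; for $m\le n-1$ the lower limit $m-n+1$ in Corollary~\ref{inc-sym} is nonpositive, so $\rank(\inc^m)=\sum_{i=0}^{[m/2]}\beta_i(X(\RR))$ as you use.
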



\subsection{The odd-dimensional formula}


Assume that $n=\Dim X\geq 3$ is odd and  $\Tors_2 H_*(X;\ZZ)=0$. Using
\eqref{M-chi}, \eqref{totaro-notorsion}, and $\chi(X(\RR))=0,$
the deficiency of $X^{[2]}$ can be computed as follows:
\begin{align}
\label{deficiency-square-odd}
\defi(X^{[2]})=&\, \beta_*(X^{[2]})-\beta_*(X^{[2]}(\RR))\notag\\ 
=&\, \beta_*(X^{[2]})-2\beta_{\rm even}(X^{[2]}(\RR))+\chi(X^{[2]}(\RR))\notag\\ 
=&\, \frac12\beta_*^2+n\beta_*-2\beta_{\rm odd} - 2\beta_{\rm even}(X^{[2]}(\RR)).
\end{align}
Notice now that 
\begin{align}
\label{sum-betti-even}
\beta_{\rm even}(X^{[2]}(\RR))=&\, \beta_{\rm even}(X^{[2]}_{\rm main}(\RR))
+\beta_{\rm even}(X^{[2]}_{\rm extra}(\RR))\notag \\
=&\, 2 \sum_{l=0}^{\frac{n-1}2}\left(\beta_{2l}(X^{[2]}_{\rm main}(\RR)) 
+ \beta_{2l}(X^{[2]}_{\rm extra}(\RR))\right).
\end{align}
To compute $\beta_{2l}(X^{[2]}_{\rm main}(\RR))$ for every integer $l$ 
such that $0\leq 2l\leq n-1,$ we use the Mayer--Vietoris sequence
$$
\cdots  \ra H_{2l}(E(\RR))\xrightarrow{\mu_{2l}}\bigoplus_{i=0}^r H_{2l}(\HH_i)
\ra H_{2l}(X^{[2]}_{\text{main}}(\RR))\ra H_{2l-1}(E(\RR))\xrightarrow{\mu_{2l-1}} \dots
$$
which gives rise to the short exact sequence
$$
0\ra\coker (\mu_{2l})\ra H_{2l}(X^{[2]}_{\text{main}}(\RR))\ra \Ker(\mu_{2l-1})\ra 0,
$$
and so
\begin{align}
\label{master-main}
\beta_{2l}(X^{[2]}_{\text{main}}(\RR))=&\, \Dim\coker (\mu_{2l})
+\Dim\Ker(\mu_{2l-1})\notag \\
=&\,\sum_{i=0}^r\beta_{2l}(\HH_i)+\beta_{2l-1}(E(\RR)) -\rank(\mu_{2l})-\rank(\mu_{2l-1}).
\end{align}
Therefore, for every integer $l$ such that $0\leq 2l\leq n-1,$  we have
\begin{align}
\label{beta-2l-first}
\beta_{2l}(X^{[2]}(\RR))=\,& \beta_{2l}(X^{[2]}_{\rm main}(\RR))+
\beta_{2l}(X^{[2]}_{\rm extra}(\RR))\notag\\
=&\, \beta_{2l}(X^{[2]}_{\rm extra}(\RR))+
\sum_{i=0}^r\beta_{2l}(\HH_i)+\beta_{2l-1}(E(\RR))\notag \\
&\, -\rank(\mu_{2l})-\rank(\mu_{2l-1}).
\end{align}
Notice that by the K\"unneth formula we have 
\begin{align}
\label{extra-even}
\beta_{2l}(X^{[2]}_{\rm extra}(\RR))
=&\, \sum_{1\leq s<t\leq r} \beta_{2l}(F_s\times F_t) \notag \\
=&\, \sum_{i+j=2l}\sum_{1\leq s<t\leq r}\beta_i(F_s)\beta_j(F_t).
\end{align}

For every integer $l$ such 
that $0\leq 2l\leq n-1,$  a direct computation using (\ref{extra-even}) 
and Corollary \ref{beta-square} shows that

\begin{align}
\label{even-single-quadratic}
\beta_{2l}(X^{[2]}_{\rm extra}(\RR))+\sum_{i=1}^r\beta_{2l}(\HH_i)=&\, 
\sum_{i+j=2l}\sum_{1\leq s<t\leq r}\beta_i(F_s)\beta_j(F_t) 
 +\sum_{i=1}^r\sum_{\substack{a+b=2l\\a<b}} \beta_a(F_i)\beta_b(F_i)\notag \\
&+\, \sum_{i=1}^r\frac12\beta_l(F_i)(\beta_l(F_i)-1)
+\sum_{i=1}^r\sum_{a=0}^{l} \beta_a(F_i)\notag\\
=&\, \frac12\sum_{a+b=2l}\beta_a(X(\RR))\beta_b(X(\RR))  \notag\\ 
&\, +\sum_{a=0}^{l}\beta_a(X(\RR))-\frac12\beta_l(X(\RR)).
\end{align}
According to Lemma \ref{dimensions-H0}, we have 
$$
\beta_{2l}(\HH_0)=\sum_{i=2n-2l}^{2n}\beta_i(X)=\sum_{j=0}^{2l}\beta_j.
$$
Thus, by \eqref{even-single-quadratic}, equation
\eqref{beta-2l-first} can be written as follows:
\begin{align}
\label{beta-2l}
\beta_{2l}(X^{[2]}(\RR))= & \sum_{j=0}^{2l}\beta_j
+\frac12\sum_{i+j=2l}\beta_i(X(\RR))\beta_j(X(\RR))+ 
\sum_{a=0}^{l}\beta_a(X(\RR)) - \frac12\beta_l(X(\RR))
\notag\\
&\,  + \sum_{i=0}^{2l-1}\beta_i(X(\RR))-\rank(\mu_{2l})-\rank(\mu_{2l-1}).
\end{align}
From Lemma \ref{elementary-odd} and formula (\ref{beta-2l}) we find:

\begin{align}
\label{sum_beta_even_mu}
\beta_{\rm even}(X^{[2]}(\RR)) 
=&\,2\sum_{l=0}^{\frac{n-1}{2}}\beta_{2l}(X^{[2]}(\RR))\notag\\
=&\, 2\sum_{l=0}^{\frac{n-1}{2}} \sum_{j=0}^{2l}\beta_j
+\sum_{l=0}^{\frac{n-1}{2}}\sum_{i+j=2l}\beta_i(X(\RR))\beta_j(X(\RR)) 
+2\sum_{l=0}^{\frac{n-1}{2}}\sum_{a=0}^{l-1}\beta_a(X(\RR))\notag\\
&\,+ 2\sum_{l=0}^{\frac{n-1}{2}}\sum_{i=0}^{2l-1}\beta_i(X(\RR))
+\sum_{l=0}^{\frac{n-1}{2}}\beta_l(X(\RR))
-2\sum_{j=0}^{n-1}\rank(\mu_j)\notag\\
=&\,\frac{ \beta^2_*}{4}
+\frac{n\beta_*}{2}+2\sum_{l=0}^{\frac{n-1}{2}} \sum_{j=0}^{2l}\beta_j
+2\sum_{l=1}^{\frac{n-1}{2}}\sum_{i=0}^{l-1}\beta_i(X(\RR))
-2\sum_{j=0}^{n-1}\rank(\mu_j).
\end{align}

As in the even-dimensional case, we find:

\begin{thm}
\label{odd-rk-thm}
Let $X$ be a nonsingular real projective variety of odd dimension $n\geq 3.$ 
If $X$ is Smith--Thom maximal and $\Tors_2 H_*(X;\ZZ)=0,$ then the 
Smith--Thom deficiency of its Hilbert square is as follows:
\begin{equation}
\label{deficiency-odd_mu}
\defi(X^{[2]})=
\, 4\left( \sum_{k=0}^{n-1}\rank(\mu_{k})
-\sum_{l=0}^{\frac{n-1}{2}}\sum_{j=0}^{2l} \beta_j -
\sum_{l=1}^{\frac{n-1}{2}} \sum_{i=0}^{l-1}\beta_i(X(\RR))
-\frac{\beta_{\rm odd}}{2}\right)
\end{equation}
\end{thm}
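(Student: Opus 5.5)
The plan is to substitute the expression \eqref{sum_beta_even_mu} for $\beta_{\rm even}(X^{[2]}(\RR))$ into the deficiency formula \eqref{deficiency-square-odd}, exactly as in the proof of Theorem \ref{even-rk-thm}. Since \eqref{sum_beta_even_mu} already records the dependence on the ranks $\rank(\mu_j)$, the argument reduces to bookkeeping of the remaining terms.

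Concretely, \eqref{deficiency-square-odd} gives
$$\defi(X^{[2]})=\tfrac12\beta_*^2+n\beta_*-2\beta_{\rm odd}-2\beta_{\rm even}(X^{[2]}(\RR)).$$
Twice the right-hand side of \eqref{sum_beta_even_mu} equals
$$\tfrac12\beta_*^2+n\beta_*+4\sum_{l=0}^{\frac{n-1}{2}}\sum_{j=0}^{2l}\beta_j+4\sum_{l=1}^{\frac{n-1}{2}}\sum_{i=0}^{l-1}\beta_i(X(\RR))-4\sum_{j=0}^{n-1}\rank(\mu_j).$$
In the difference, the terms $\tfrac12\beta_*^2$ and $n\beta_*$ cancel. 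What remains is $-2\beta_{\rm odd}$, minus four times the two double sums, plus $4\sum_j\rank(\mu_j)$. Factoring out $4$, and writing $-2\beta_{\rm odd}=-4\cdot\frac{\beta_{\rm odd}}2$, yields \eqref{deficiency-odd_mu}.

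The main point needing care is the last equality in \eqref{sum_beta_even_mu}, which I would re-verify before relying on it. Its ingredients are as follows.
\begin{itemize}
\item Identity \eqref{A-odd} turns the quadratic sum into $\frac14\beta_*^2$.
\item Identity \eqref{C-odd} turns $2\sum_l\sum_{i\le 2l-1}\beta_i(X(\RR))$ into $\frac{n-1}{2}\beta_*$.
\item Identity \eqref{B-odd} turns $\sum_l\beta_l(X(\RR))$ into $\frac12\beta_*$.
\end{itemize}
The last two combine to $\frac n2\beta_*$. In addition, the rewriting $\sum_{a=0}^l\beta_a-\frac12\beta_l=\sum_{a=0}^{l-1}\beta_a+\frac12\beta_l$ accounts for the shift of the inner upper limit to $l-1$, and the $l=0$ term of that sum vanishes, so it starts at $l=1$. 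Finally, the rank terms telescope: summing $\rank(\mu_{2l})+\rank(\mu_{2l-1})$ over $0\le l\le\frac{n-1}2$ gives $\sum_{j=0}^{n-1}\rank(\mu_j)$, with $\mu_{-1}=0$.

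Once these identities are confirmed, the theorem follows by direct substitution.
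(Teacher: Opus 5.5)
Your proposal is correct and is the paper's own argument: the paper proves this theorem by substituting \eqref{sum_beta_even_mu} into \eqref{deficiency-square-odd}, just as you do. Your bookkeeping also checks out. The terms $\tfrac12\beta_*^2+n\beta_*$ cancel, and your re-verification of the last step of \eqref{sum_beta_even_mu} is accurate: it uses \eqref{A-odd}--\eqref{C-odd}, the index shift, and the collapse of the rank sums to $\sum_{j=0}^{n-1}\rank(\mu_j)$ with $\mu_{-1}=0$.
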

\proof
Substitute \eqref {sum_beta_even_mu} into \eqref{deficiency-square-odd}.
\qed

\begin{cor}
\label{rank-defect-odd}
If $X$ is a nonsingular Smith--Thom maximal variety with $n=\Dim\, X$ odd and 
$\Tors_2 H_*(X;\ZZ)=0$, then $X^{[2]}$ is Smith--Thom maximal if and only if
$$
\sum_{k=0}^{n-1}\rank(\mu_{k}) = 
\sum_{l=0}^{\frac{n-1}{2}}\sum_{j=0}^{2l} \beta_j +
\sum_{l=1}^{\frac{n-1}{2}} \sum_{i=0}^{l-1}\beta_i(X(\RR))+\frac{\beta_{\rm odd}}2. 
$$
\qed
\end{cor}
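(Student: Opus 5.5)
The plan is to read this off directly from Theorem~\ref{odd-rk-thm}, in the same way that Corollary~\ref{rank-defect-even} follows from Theorem~\ref{even-rk-thm}. By definition, $X^{[2]}$ is Smith--Thom maximal exactly when $\defi(X^{[2]})=0$. The hypotheses of Theorem~\ref{odd-rk-thm} coincide with ours: $X$ is nonsingular, maximal, of odd dimension $n\geq 3$, and satisfies $\Tors_2 H_*(X;\ZZ)=0$. (For $n=1$ the statement lies outside the range $n\geq 2$ considered throughout this section.) So formula \eqref{deficiency-odd_mu} applies, and $\defi(X^{[2]})$ equals $4$ times
$$
\sum_{k=0}^{n-1}\rank(\mu_{k})
-\sum_{l=0}^{\frac{n-1}{2}}\sum_{j=0}^{2l} \beta_j -
\sum_{l=1}^{\frac{n-1}{2}} \sum_{i=0}^{l-1}\beta_i(X(\RR))
-\frac{\beta_{\rm odd}}{2}.
$$

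Since $4\neq 0$, the deficiency vanishes if and only if this bracket vanishes. That is precisely the displayed equality in the corollary, so both implications follow at once.

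The only point needing care is bookkeeping rather than mathematics. The quantity $\beta_{\rm odd}/2$ must be read as the rational number appearing in \eqref{deficiency-odd_mu}. It is in fact an integer, because Poincar\'e duality pairs $\beta_{2i+1}$ with $\beta_{2n-2i-1}$ and these are distinct odd degrees, but the equivalence holds regardless of integrality. One should also check that the index ranges in the statement match those in \eqref{deficiency-odd_mu} term by term, which they do verbatim. As a side remark, Corollary~\ref{cor-smith} gives $\defi\ge 0$, so the bracket is always nonnegative. The corollary therefore also says that the left-hand sum is always at least the right-hand side, with equality characterizing maximality.
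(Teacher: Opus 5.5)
Your proposal is correct and matches the paper's own (implicit) argument: the corollary is read off from \eqref{deficiency-odd_mu} in Theorem~\ref{odd-rk-thm}, since $\defi(X^{[2]})=0$ exactly when the bracket vanishes. One small slip in your aside, which does not affect the equivalence: for $n$ odd the middle degree $n$ is itself odd and is paired with itself under Poincar\'e duality, so the evenness of $\beta_{\rm odd}$ also needs $\beta_n$ to be even (this holds, e.g.\ because $\beta_n=b_n$ under the $2$-torsion hypothesis and the odd Betti numbers of a compact K\"ahler manifold are even).
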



\subsubsection{Proof of Theorem \ref{defect-mv} for odd-dimensional varieties}


As in the even-dimensional case, substituting the formulas from 
Lemma \ref{inc-zero} and Corollary \ref{inc-sym} into \eqref{deficiency-odd_mu} 
and applying  \eqref{bound-ranks}, we obtain the lower bound for the 
Smith--Thom deficiency of $X^{[2]}$ asserted in Theorem \ref{defect-mv}:
\begin{align*}
\frac14\defi(X^{[2]})\geq &\,
 \max\left\{\sum_{k=0}^{n-1}\sum_{i=0}^{k}\beta_i, 
 \sum_{k=0}^{n-1}\sum_{i=0}^{[k/2]}\beta_i(X(\RR))\right\} \\
 &\, - \sum_{l=0}^{\frac{n-1}{2}}\sum_{j=0}^{2l} \beta_j -
\sum_{l=1}^{\frac{n-1}{2}} \sum_{i=0}^{l-1}\beta_i(X(\RR))-\frac{\beta_{\rm odd}}{2}\\
\geq &\, \sum_{k=0}^{n-1}\sum_{i=0}^{[k/2]}\beta_i(X(\RR))
-   \sum_{l=0}^{\frac{n-1}{2}}\sum_{j=0}^{2l} \beta_j -
\sum_{l=1}^{\frac{n-1}{2}} \sum_{i=0}^{l-1}\beta_i(X(\RR))-\frac{\beta_{\rm odd}}{2}\\
=&\,\sum_{k=0}^{\frac{n-1}2} \sum_{i=0}^{k}\beta_i(X(\RR))-
\sum_{l=0}^{\frac{n-1}{2}}\sum_{j=0}^{2l} \beta_j -\frac12\beta_{\rm odd}.
\end{align*}
\qed

\smallskip

In particular, we obtain the following criterion:

\begin{cor}
\label{RealDefectEstimate-odd}
Let $X$ be a nonsingular Smith--Thom maximal variety with $n=\Dim\, X$ odd
and $\Tors_2 H_*(X;\ZZ)=0.$ If
\begin{equation*}
\sum_{k=0}^{\frac{n-1}2} \sum_{i=0}^{k}\beta_i(X(\RR))>
 \sum_{l=0}^{\frac{n-1}{2}}\sum_{j=0}^{2l} \beta_j +\frac{\beta_{\rm odd}}2,
\end{equation*}
then $X^{[2]}$ is not Smith--Thom maximal.
\qed
\end{cor}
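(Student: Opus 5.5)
Corollary~\ref{RealDefectEstimate-odd} follows at once from the odd-dimensional case of Theorem~\ref{defect-mv}, which was just established. The plan is to read off the sign of the lower bound.

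Under the stated hypotheses ($X$ maximal, $n$ odd, $\Tors_2 H_*(X;\ZZ)=0$), the preceding computation gives
\begin{equation*}
\frac14\defi(X^{[2]})\geq \sum_{k=0}^{\frac{n-1}2} \sum_{i=0}^{k}\beta_i(X(\RR))-
\sum_{l=0}^{\frac{n-1}{2}}\sum_{j=0}^{2l} \beta_j -\frac12\beta_{\rm odd}.
\end{equation*}
This bound was obtained as follows:
\begin{itemize}
\item[1)] Theorem~\ref{odd-rk-thm} expresses $\defi(X^{[2]})$ exactly in terms of $\sum_{k}\rank(\mu_k)$.
\item[2)] Each $\rank(\mu_k)$ is bounded below by $\rank(\inc^k)$, via \eqref{bound-ranks}.
\item[3)] Corollary~\ref{inc-sym} evaluates $\rank(\inc^k)=\sum_{i=0}^{[k/2]}\beta_i(X(\RR))$ for $k\le n-1$.
\end{itemize}
Note that for $k\le n-1$ the lower summation limit $k-n+1$ in Corollary~\ref{inc-sym} is nonpositive, so the sum indeed starts at $i=0$.

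The remaining step is the reindexing identity
\begin{equation*}
\sum_{k=0}^{n-1}\sum_{i=0}^{[k/2]}\beta_i(X(\RR))-\sum_{l=1}^{\frac{n-1}{2}}\sum_{i=0}^{l-1}\beta_i(X(\RR))=\sum_{k=0}^{\frac{n-1}{2}}\sum_{i=0}^{k}\beta_i(X(\RR)).
\end{equation*}
To check it, split the values $k=0,\dots,n-1$ into even $k=2m$ with $0\le m\le \frac{n-1}2$, and odd $k=2m+1$ with $0\le m\le \frac{n-3}{2}$. Both parities contribute $\sum_{i\le m}\beta_i(X(\RR))$. After substituting $l=m+1$, the odd contributions are exactly cancelled by the subtracted double sum, which leaves the even contributions, i.e.\ the right-hand side above.

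Hence, if the displayed strict inequality of the corollary holds, the right-hand side of the bound is positive. Then $\defi(X^{[2]})>0$, so $X^{[2]}$ is not Smith--Thom maximal.

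There is no real obstacle here. The only point needing care is the bookkeeping of summation ranges in the reindexing, which I would verify directly in the smallest case $n=3$ as a sanity check. The term $\frac12\beta_{\rm odd}$ is carried along unchanged from Theorem~\ref{odd-rk-thm}.
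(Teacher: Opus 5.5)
Your proposal is correct and follows the paper's route exactly: combine the exact formula of Theorem~\ref{odd-rk-thm} with $\rank(\mu_k)\geq\rank(\inc^k)$ from \eqref{bound-ranks} and the evaluation in Corollary~\ref{inc-sym}, then observe that the hypothesis makes the lower bound of Theorem~\ref{defect-mv} strictly positive. Your reindexing (even $k=2m$ survive, odd $k=2m+1$ cancel against the subtracted double sum after $l=m+1$) is precisely the simplification in the last line of the paper's proof of the odd case.
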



\section{Applications}
\label{examples}


\subsection{Abelian varieties}


We now apply Theorem \ref{defect-mv} to real abelian varieties. For maximal real abelian 
varieties, the lower bound in Theorem \ref{defect-mv} proves nonmaximality 
of the Hilbert square in every dimension except $2$ and $4$. 
To handle these cases, we improve the estimate for the total rank of the 
Mayer--Vietoris maps in \eqref{deficiency-even_mu} and \eqref{deficiency-odd_mu}.

Let $X$ be an $n$-dimensional Smith--Thom maximal abelian variety. In this case, 
$X(\RR)$ is a disjoint union of $2^n$ tori of real dimension $n.$ Recall that the lower 
bound for the Smith--Thom deficiency obtained in Theorem \ref{defect-mv} depends on 
the parity of $n$ and is given by
\begin{equation*}
D_n:=
\begin{cases}
\displaystyle
4\left(\frac12\beta_{\rm{even}}+\sum_{l=1}^{\frac{n-1}{2}}\sum_{i=0}^{l-1}\beta_i(X(\RR))
-\sum_{l=0}^{\frac{n-1}{2}} \sum_{j=0}^{2l}\beta_j\right),\, \text{if $n$ is odd}\\
\displaystyle
4\left(\sum_{l=1}^{\frac{n}{2}}\sum_{i=0}^{l-1}\beta_i(X(\RR))
-\sum_{l=1}^{\frac{n}{2}}\sum_{j=0}^{2l-1} \beta_j\right),\, \text{if $n$ is even}.
\end{cases}
\end{equation*}

\begin{lem} Let $n\geq 2$ be an integer. Then:
\begin{equation}
\label{LowerBoundDiscrepancy}
D_n=
\begin{cases}
\, 2^{n}(n+1) \binom{n}{\frac{n-1}2}-2^{2n-1}-n\binom{2n}{n},\, {\text{if $n$ is odd,}}\\
\, 2^{n} n \binom{n}{\frac{n}2}-2^{2n-1}-{n}\binom{2n}{n},\, {\text{if $n$ is even.}}\\
\end{cases}
\end{equation}
\end{lem}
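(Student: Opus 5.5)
The plan is to substitute the Betti numbers of a maximal abelian variety into the definition of $D_n$ and then evaluate the resulting binomial sums in closed form. Since $X(\CC)$ is a real $2n$-torus, we have $\beta_j=\binom{2n}{j}$, $\beta_*=2^{2n}$ and $\beta_{\rm even}=\beta_{\rm odd}=2^{2n-1}$. Since $X(\RR)$ is a disjoint union of $2^n$ copies of $(S^1)^n$, we have $\beta_i(X(\RR))=2^n\binom{n}{i}$. After substitution, each double sum is exchanged into a single weighted sum:
$$
\sum_{l}\sum_{i\le l-1}\beta_i=\sum_i(\#\{l\})\,\beta_i .
$$
This gives four sums. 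In the even case they are
$$
\sum_{i=0}^{\frac n2-1}\Bigl(\tfrac n2-i\Bigr)\binom ni,
\qquad
\sum_{j=0}^{n-1}\Bigl(\tfrac n2-\bigl\lfloor\tfrac j2\bigr\rfloor\Bigr)\binom{2n}{j}.
$$
In the odd case they are
$$
\sum_{i}\Bigl(\tfrac{n-1}2-i\Bigr)\binom ni,
\qquad
\sum_{j=0}^{n-1}\Bigl(\tfrac{n+1}2-\bigl\lceil\tfrac j2\bigr\rceil\Bigr)\binom{2n}{j}.
$$
Finally, there is the extra term $4\cdot\frac12\beta_{\rm even}=2^{2n}$ in the odd case.

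The key tool is a telescoping identity. From $(N-i)\binom Ni=(i+1)\binom N{i+1}$ we get
$$
\Bigl(\tfrac N2-i\Bigr)\binom Ni=\tfrac12\Bigl[(i+1)\binom N{i+1}-i\binom Ni\Bigr].
$$
Summing over $0\le i<m$ therefore gives
$$
\sum_{i<m}\Bigl(\tfrac N2-i\Bigr)\binom Ni=\tfrac m2\binom Nm .
$$
With $N=2n$ and $m=n$ this gives $\frac n2\binom{2n}n$. With $N=n$ even and $m=\frac n2$ it gives $\frac n4\binom n{n/2}$, so the real-part contribution in the even case is
$$
4\cdot 2^n\cdot\tfrac n4\binom n{n/2}=2^n n\binom n{n/2},
$$
as required. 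For the $\binom{2n}{j}$ sum in the even case, I split the weight as
$$
\tfrac n2-\bigl\lfloor\tfrac j2\bigr\rfloor=\tfrac{n-j}2+\tfrac12[j\text{ odd}].
$$
The sum thus becomes $\frac12\cdot\frac n2\binom{2n}n$ plus $\frac12$ times the sum of $\binom{2n}{j}$ over odd $j<n$. The symmetry $j\leftrightarrow 2n-j$ preserves parity, and the middle index $n$ is even, so this odd sum equals half of $2^{2n-1}$, namely $2^{2n-2}$. Multiplying by $-4$ gives $-n\binom{2n}n-2^{2n-1}$, which finishes the even case. As a sanity check, for $n=2$ both sides give $8-8-12$.

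The odd case is handled the same way. For the real sum with $N=n$ odd, I write $\frac{n-1}2-i=(\frac n2-i)-\frac12$. Telescoping up to $m=\frac{n-1}2$ then gives $\frac{n-1}4\binom n{(n-1)/2}$, minus half of $\sum_{i<(n-1)/2}\binom ni$, where the latter is $2^{n-1}-\binom n{(n-1)/2}$ by symmetry. For the $\binom{2n}j$ sum, I write the weight as $\frac{n-j}2+\frac12$ for even $j$ and $\frac{n-j}2$ for odd $j$. Here $n$ is odd, so the even $j<n$ contribute half of $2^{2n-1}$. Collecting terms, the $2^{n}\cdot 2^{n-1}$ pieces should combine with $2^{2n}$ to give $-2^{2n-1}$. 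The $\binom n{(n-1)/2}$ coefficients should combine to $2^n(n+1)$, using $\frac{n-1}{4}\cdot 4+2=n+1$.

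The main obstacle is the careful index bookkeeping in the odd case. This means getting the exact range of $l$ for each $j$ right, including the boundary terms at $l=0$ and $i=l$, and tracking the half-binomial corrections. I would verify the final formula against $n=3$ by direct computation before writing it up.
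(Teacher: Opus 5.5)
Your proposal is correct and follows the paper's route: substitute $\beta_j=\binom{2n}{j}$ and $\beta_i(X(\RR))=2^n\binom{n}{i}$ and evaluate the binomial sums, with your telescoping identity $\sum_{i<m}(\tfrac N2-i)\binom Ni=\tfrac m2\binom Nm$ supplying the ``elementary combinatorics'' the paper leaves implicit, and your weight bookkeeping in both parity cases checks out. One slip: the $n=2$ sanity check should read $16-8-12=-4$, since $2^2\cdot 2\binom{2}{1}=16$; this matches the paper's $D_2=-4$ and the direct value $4\bigl(\beta_0(X(\RR))-\beta_0-\beta_1\bigr)=4(4-1-4)$.
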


\proof First suppose that $n$ is odd. By elementary combinatorics, we find:
\begin{align*}
\frac{D_n}4= 
&\, \sum_{k=0}^{\frac{n-1}2} \sum_{i=0}^{k}\beta_i(X(\RR))-
\sum_{l=0}^{\frac{n-1}{2}}\sum_{j=0}^{2l} \beta_j -\frac12\beta_{\rm odd}\\ 
= &\, 2^n\sum_{k=0}^{\frac{n-1}2} \sum_{i=0}^{k}\binom{n}{i}-
\sum_{l=0}^{\frac{n-1}{2}}\sum_{j=0}^{2l} \binom{2n}{j} -\frac12\sum_{k=1}^{n}\binom{2n}{2k-1}\\ 
=&\, 2^{n-2}(n+1) \binom{n}{\frac{n-1}2}-2^{2n-3}-\frac{n}4\binom{2n}{n}.
\end{align*}

Now suppose that $n$ is even. In this case, we find:
\begin{align*}
\frac{D_n}4= &\, \sum_{k=1}^{\frac{n}2} \sum_{i=0}^{k-1}\beta_i(X(\RR))-
 \sum_{l=1}^{\frac{n}{2}}\sum_{j=0}^{2l-1} \beta_j \\
 =&\,2^n\sum_{k=1}^{\frac{n}2} \sum_{i=0}^{k-1}\binom{n}{i}
 -\sum_{l=1}^{\frac{n}{2}}\sum_{j=0}^{2l-1}\binom{2n}{j}\\
=&\, 2^{n-2} n \binom{n}{\frac{n}2}-2^{2n-3}-\frac{n}4\binom{2n}{n}.
\end{align*}
The conclusion of the lemma follows immediately.
\qed

\medskip

A direct computation shows that $D_2=-4$ and $D_4=-24.$ 
Aside from these two cases, the lower bound $D_n$ 
has a rather uniform behavior, which we describe next.

\begin{lem}
\label{bound+monotonicity} 
The following assertions hold:
\begin{itemize}
\item[ 1)] $D_n\geq2^{2n-\epsilon}$, where $\epsilon=4$ if $n$ is odd and $n\geq 3$, 
while $\epsilon=6$ if $n$ is even and $n\geq 6.$
\item[ 2)] $D_{n+2}>D_n$ for every odd integer $n\geq 3$ and for every even 
integer $n\geq 4.$
\end{itemize}
\end{lem}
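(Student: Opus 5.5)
The plan is to work with the closed forms \eqref{LowerBoundDiscrepancy}, normalized by $4^n$. Set $g(n)=D_n/4^n$. For odd $n$, I first rewrite $\binom{n}{\frac{n-1}2}=\frac12\binom{n+1}{\frac{n+1}2}$. This turns both parity cases into expressions involving only central binomial coefficients:
\begin{equation*}
g(n)=\begin{cases}
\displaystyle \frac{n+1}{2}\cdot\frac{\binom{n+1}{(n+1)/2}}{2^{n}}-\frac12-\frac{n\binom{2n}{n}}{4^n}, & n \text{ odd},\\[2mm]
\displaystyle n\cdot\frac{\binom{n}{n/2}}{2^{n}}-\frac12-\frac{n\binom{2n}{n}}{4^n}, & n \text{ even}.
\end{cases}
\end{equation*}
I will then use the classical two-sided estimate
\begin{equation*}
\frac{4^m}{\sqrt{\pi(m+\frac12)}}\le\binom{2m}{m}\le\frac{4^m}{\sqrt{\pi(m+\frac14)}} .
\end{equation*}
This gives explicit lower bounds for $g(n)$. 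For even $n$,
\begin{equation*}
g(n)\ge \frac{n}{\sqrt{\pi(n+1)/2}}-\frac{n}{\sqrt{\pi(n+\frac14)}}-\frac12 ,
\end{equation*}
and the odd case has the analogous bound with $(n+1)/\sqrt{2\pi(n+2)}$ as the leading term. Both bounds behave like $(\sqrt2-1)\sqrt{n/\pi}-\frac12+O(n^{-1/2})$, so they are increasing and tend to infinity.

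For assertion 1), note that $2^{2n-\epsilon}=4^n/2^\epsilon$, so it suffices to show $g(n)\ge 1/16$ for odd $n\ge3$ and $g(n)\ge1/64$ for even $n\ge6$. I will fix an explicit threshold $N$ beyond which the analytic lower bound exceeds $1/16$. This requires checking monotonicity of the bound, which is elementary calculus on functions of the form $a\sqrt{n}-b\sqrt n\,(1+O(1/n))$, together with one numerical evaluation at $N$. Below $N$ the analytic bound is too weak; for instance at $n=6$ it is slightly negative. So for $3\le n<N$ (odd) and $6\le n<N$ (even) I verify the claim directly from the exact formula \eqref{LowerBoundDiscrepancy}. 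I expect $N$ around $12$–$15$, which leaves a handful of direct evaluations, e.g.\ computing $D_3,D_5,D_6,D_7,\dots$.

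For assertion 2), I avoid differencing the binomial expressions. Writing $D_{n+2}-D_n=4^n\bigl(16\,g(n+2)-g(n)\bigr)$, it suffices to show $16\,g(n+2)>g(n)$. If $D_n<0$, which happens only for $n=4$ in the stated range, this is immediate because $D_6>0$ by 1). Otherwise I pair the lower bound for $g(n+2)$ with a crude upper bound for $g(n)$. Dropping the negative terms and using the upper estimate for the central binomial coefficient gives $g(n)\le n\sqrt{2/(\pi n)}=\sqrt{2n/\pi}$, which is $O(\sqrt n)$. Meanwhile $16\,g(n+2)$ is at least $16\bigl((\sqrt2-1)\sqrt{(n+2)/\pi}-\frac12-o(1)\bigr)$, and $16(\sqrt2-1)\approx 6.6>\sqrt2$. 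So the inequality holds for all $n$ past a small threshold, and the remaining small $n$ (from $n=3$ and $n=4$ up to that threshold) are again checked from the exact values.

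The main obstacle is that the estimates are delicate for small $n$. The leading terms $n\binom{n}{n/2}2^{n}$ and $n\binom{2n}{n}$ differ only by a factor of about $\sqrt2$, and the constant $2^{2n-1}$ is comparable to their difference until $n$ is moderately large; this is exactly why $D_2$ and $D_4$ are negative. So the real work is choosing sufficiently sharp binomial bounds to keep the threshold $N$ small, and then carrying out the finite check of exact values of $D_n$ carefully.
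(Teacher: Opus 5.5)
Your route is correct in outline but differs from the paper's. The paper does exactly the differencing you chose to avoid, and it turns out to be the short path. The identities $\binom{n+2}{n/2+1}=\frac{4(n+1)}{n+2}\binom{n}{n/2}$, $\binom{n+2}{(n+1)/2}=\frac{4(n+2)}{n+3}\binom{n}{(n-1)/2}$ and $(n+2)\binom{2n+4}{n+2}=\frac{4(2n+1)(2n+3)}{n+1}\binom{2n}{n}$ give exact two-step recurrences
\begin{align*}
D_{n+2}&=\frac{16(n+1)}{n}\,D_n+\frac{2^{2n+3}}{n}+\frac{4}{n+1}\binom{2n}{n} && (n\ \text{even}),\\
D_{n+2}&=\frac{16(n+2)}{n+1}\,D_n+\frac{2^{2n+3}-12\binom{2n}{n}}{n+1} && (n\ \text{odd}).
\end{align*}
The last numerator is positive because $2^{2n+1}>2\binom{2n+1}{n}=\frac{2(2n+1)}{n+1}\binom{2n}{n}$. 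Hence $D_{n+2}>16D_n$ whenever $D_n>0$. Both assertions then follow by induction from $D_3=4$ and $D_6=88$, together with $D_6>D_4=-24$. This gives $D_n\ge 2^{2n-4}$ for odd $n$ and $D_n\ge 88\cdot 2^{2n-12}>2^{2n-6}$ for even $n$, and monotonicity comes for free. The paper's argument is self-contained and uses no analytic input. Your approach instead needs the sharp Wallis-type bounds for $\binom{2m}{m}$, a calculus check that your lower bounds increase, a separate upper bound for part 2), and a finite table of values. In exchange, it gives the explicit asymptotics $D_n/4^n\sim(\sqrt2-1)\sqrt{n/\pi}$, which the induction does not make visible.

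A few things to fix:
\begin{itemize}
\item In the odd case, the leading term of your lower bound is $\sqrt2\,(n+1)/\sqrt{\pi(n+2)}$, not $(n+1)/\sqrt{2\pi(n+2)}$. With the latter, the bound tends to $-\infty$, which contradicts your own asymptotic claim.
\item For odd $n$, the crude upper bound is $2(n+1)/\sqrt{\pi(2n+3)}$. This is slightly larger than $\sqrt{2n/\pi}$, which is harmless.
\item The threshold is much smaller than you expect. Your corrected lower bounds are increasing and already exceed $1/64$ at $n=8$ and $1/16$ at $n=5$. So the finite check for 1) reduces to $D_3=4$ (where equality $D_3=2^2$ holds, so $\epsilon=4$ is sharp) and $D_6=88$, which are precisely the paper's base cases. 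For 2), besides $D_6>D_4$, it suffices to add $D_5=148>D_3$ and $D_8=7632>D_6$.
\end{itemize}
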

\proof The proof is elementary. We assume first that $n$ is even. A direct computation 
shows $D_6=88>0.$ Notice now that    
\begin{align*}
D_{n+2}= &\,2^{n+2}(n+2)\binom{n+2}{\frac{n}2+1}
-2^{2n+3}-(n+2)\binom{2n+4}{n+2}\\
= &\,\frac{16(n+1)}{n}2^{n}n\binom{n}{\frac{n}2}-2^{2n+3}
-\frac{4(2n+1)(2n+3)}{n+1}\binom{2n}{n}\\
= &\,\frac{16(n+1)}{n}D_n+ \left(\frac{16(n+1)}{n}-16\right)2^{2n-1}\\
&+ \left(16(n+1)-\frac{4(2n+1)(2n+3)}{n+1}\right)\binom{2n}{n}\\
=&\,\frac{16(n+1)}{n}D_n+\frac{1}{n}2^{2n+1}+\frac{4}{n+1}\binom{2n}{n}.
\end{align*}
Therefore $D_{n+2}>16D_n,$ and so, by induction, for every 
even integer $n\geq 6$ we have 
$$
D_n\geq D_6 \times 16^{(n-6)/2}=88 \times 2^{2n-12}> 2^{2n-6}.
$$
Also, since $D_{n+2}>16D_n,$ we find that $D_n>0$ for every even integer $n\geq 6,$ 
and $D_{n+2}>D_n$ for every even integer $n\geq 4.$ 

In the case when $n$ is odd, we proceed similarly. We find
\begin{align*}
D_{n+2}= &\, 2^{n+2}(n+3) \binom{n+2}{\frac{n+1}2}-2^{2n+3}
-(n+2)\binom{2n+4}{n+2}\\
= &\,\frac{16(n+2)}{n+1}2^{n}(n+1) \binom{n}{\frac{n-1}2
}-2^{2n+3}-\frac{4(2n+1)(2n+3)}{n+1}\binom{2n}{n}\\
= &\,\frac{16(n+2)}{n+1}D_n+ \left(\frac{16(n+2)}{n+1}-16\right)2^{2n-1}\\
&+ \frac{4}{n+1}\left(4n(n+2)-(2n+1)(2n+3)\right)\binom{2n}{n}\\
=&\,\frac{16(n+2)}{n+1}D_n+\frac{1}{n+1}2^{2n+3}-\frac{12}{n+1}\binom{2n}{n}\\
>&\,\frac{16(n+2)}{n+1}D_n+\frac{4}{n+1}\left[2\binom{2n+1}{n}-3\binom{2n}{n}\right]\\
>&\,\frac{16(n+2)}{n+1}D_n+\frac{4(n-1)}{(n+1)^2}\binom{2n}{n}.
\end{align*}
Therefore, $D_{n+2}>16D_n$ for every odd integer $n\geq 3.$
Since $D_3=4,$ by induction it follows that $D_n$ is positive and $D_{n+2}>D_n$ 
for every odd integer $n\geq 3.$ Furthermore,
$$
D_n\geq D_3\times16^{(n-3)/2}=2^{2n-4}.
$$
\qed


\subsubsection{A refined rank estimate}


Let  $(X, \Conj)$ be a Smith--Thom maximal abelian variety of dimension $n\geq 2$. 
In this case, the action of $\Conj $ on $X$ is diffeomorphic to the action of 
${\mathrm {Id}} \times (-1)$ on $T^n\times T^n,$ where $T^n$ is the $n$-dimensional 
real torus, as follows from the normal form of involutions on  finitely generated free 
$\ZZ$-modules \cite{comessatti}. Thus, $X(\RR)$ is the union of $2^n$ disjoint copies of 
$T^n,$ while $\interior(\HH_0)=X/\Conj\setminus X(\RR)$ is diffeomorphic to the product 
$T^n \times (T^n\setminus\Fix(G))/G,$ where $G$ is the cyclic group of order two acting 
on $T^n$ as multiplication by $-1.$ The fixed-point locus of $G$ is denoted by $\Fix(G)$ 
and consists of $2^n$ isolated points $p_i\in T^n, \, i=1, \dots, 2^n.$ The quotient space 
$(T^n\setminus\Fix(G))/G$ can be viewed as the interior of an $n$-dimensional $C^{\infty}$ 
manifold $K(n)$ with boundary $\displaystyle \partial K(n)=\sqcup_{i=1}^{2^n} L_i,$ where 
$L_i\simeq \RR\PP^{n-1}$ is the link of the singularity of $T^n/G$ at the image of $p_i.$

Let 
$$
V=\bigoplus_{j=1}^{n-1}\left(H^{j}(T^n)\otimes H^0(K(n))\right)
\subseteq \bigoplus_{j=1}^{n-1}H^{j} (\HH_0).
$$
Note that $\inc_0^* $ restricted to $V$ coincides with the diagonal embedding 
of $V$ into a direct sum of $2^n$ isomorphic copies
$$
\bigoplus_{i=1}^{2^n}\left( \bigoplus_{j=1}^{n-1} \left(H^{j}(T^n)\otimes 
H^0(L_i)\right)\right)\subseteq \bigoplus_{j=1}^{n-1}H^{j} (E(\RR)).
$$ 
In particular, $\inc_0^* $ restricted to $V$ is injective.

\begin{lemma}
\label{better-estimate}
Let $X$ be a Smith--Thom maximal abelian variety of dimension $n$. Then 
\begin{equation}
\label{improved-rank-mu}
\sum_{k=0}^{n-1}\rank(\mu_k)\geq\sum_{k=0}^{n-1}\rank(\inc^k)+\Dim \inc_0^*(V).
\end{equation}
\end{lemma}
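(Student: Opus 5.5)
Since $\mu_k$ and $\mu^k$ are dual maps of finite-dimensional $\FF_2$-vector spaces, it suffices to bound $\sum_{k=0}^{n-1}\rank(\mu^k)$ from below, where $\mu^k=\inc_0^k+\inc^k$. We have $\im(\mu^k)\supseteq \im(\inc^k)+\im(\inc_0^k)$. The plan is to show that in total degrees $0\le k\le n-1$ the subspaces $\im(\inc^*)$ and $\inc_0^*(V)$ of $H^*(E(\RR))$ intersect trivially. Then
$$
\rank(\mu^*)\ \ge\ \dim\bigl(\im(\inc^*)+\inc_0^*(V)\bigr)=\rank(\inc^*)+\dim\inc_0^*(V),
$$
with everything restricted to degrees $\le n-1$. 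Because both subspaces are graded and $V$ lives in degrees $1,\dots,n-1$, this can be checked degree by degree, and summing over $k$ gives \eqref{improved-rank-mu}.

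To prove the trivial intersection, I would describe $E(\RR)=\bigsqcup_{i=1}^{2^n}\PP_\RR(T^*F_i)$ concretely. Each $F_i\cong T^n$ has trivial tangent bundle, so $\PP_\RR(T^*F_i)\cong T^n\times\RR\PP^{n-1}$. By the Künneth formula,
$$
H^k(\PP_\RR(T^*F_i))=\bigoplus_{j}H^{j}(T^n)\otimes H^{k-j}(\RR\PP^{n-1}).
$$

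\textbf{Step 1: locate $\inc_0^*(V)$.} Under the identification of $\partial\HH_0$ with $T^n\times\partial K(n)$, the component $T^n\times L_i$ corresponds to $\PP_\RR(T^*F_i)$, where the $T^n$ factor is the first factor of $T^n\times T^n$ and $F_i=T^n\times\{p_i\}$. Thus $\inc_0^*(V)$ consists of \emph{diagonal} elements $(v,v,\dots,v)$ with $v\in H^j(T^n)\otimes H^0(\RR\PP^{n-1})$, repeated over all $2^n$ components.

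\textbf{Step 2: locate $\im(\inc^*)$.} Here I use Theorem~\ref{totaro-basis}, applied separately to each $F_i$, since $\HH=\bigsqcup_i\HH_i$ and $\inc^*$ splits componentwise. The image in $H^k(\PP_\RR(T^*F_i))$ is spanned by the restrictions of $b^j[Z^{(2)}\setminus\Delta Z]$, where $Z$ is a subtorus representing a basis class $z$ with $2\deg z+j=k$. The restriction of $b$ to the boundary is the tautological class $h$ generating $H^*(\RR\PP^{n-1})$, and the restriction of $[Z^{(2)}\setminus\Delta Z]$ should be $z^2\otimes 1+(\text{terms involving } h)$. Since $z^2=0$ for $\deg z\ge1$ in $H^*(T^n;\FF_2)$, the terms with $\deg z\ge1$ involve $h$. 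The remaining case $z=1$ gives $h^j$. So the plan is to show that the span of the $h^j$ together with the classes of positive $h$-degree meets the subspace $H^{\ge1}(T^n)\otimes H^0$ trivially. This uses that the leading term of the restriction of $[Z^{(2)}\setminus\Delta Z]$ is $\bar z\otimes h^{\deg z}$, where $\bar z$ is the Poincaré dual class, which has positive $h$-degree.

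\textbf{Step 3: conclude.} If some element of $\inc_0^*(V)$ also lay in $\im(\inc^*)$, then on each component its $H^0(\RR\PP^{n-1})$-part would have to come from an element of $\im(\inc^*)$. By Step 2, the only classes in $\im(\inc^*)$ with a nonzero $h^0$-part in positive $T^n$-degree would need a $z^2$ contribution, and these vanish. Hence the element is zero, the intersection is trivial, and the inequality follows.

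The main obstacle is Step 2: one has to identify precisely the restriction of $[Z^{(2)}\setminus\Delta Z]$ to $\PP_\RR(T^*Z)\subset\PP_\RR(T^*F)$. This amounts to a formula of the form $\sum \bar z\, h^{\deg z-s}w_s(\nu)$ involving the normal bundle of $Z$. For subtori the normal bundle is trivial, so only the top term $\bar z\otimes h^{\deg z}$ survives, and in particular the restriction has no $h^0$-component when $\deg z\ge1$. I would verify this first by a local computation in a tubular neighbourhood of $\Delta Z$, using that $\partial(Z^{(2)}\setminus\Delta Z)=\PP_\RR(T^*Z)$.
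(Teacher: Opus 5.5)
Your proposal is correct and follows essentially the paper's proof. Both arguments pass to cohomology by duality and then show $\im(\inc^*)\cap\inc_0^*(V)=\{0\}$ componentwise, using Theorem~\ref{totaro-basis} to check that every positive-degree class in $\im(\inc_i^*)$ has zero $H^0(L_i)$-component. The only difference is cosmetic. You compute the restriction of $[Z^{(2)}\setminus\Delta Z]$ as $\sum_s z\,h^{\deg z-s}w_s(\nu_Z)$ and kill the lower terms because coordinate subtori have trivial normal bundles. The paper instead quotes Totaro's formula $\sum_s b^{\ell-s}\Sq^s v_\ell$ and kills the same terms with the Cartan formula on $H^*(T^n)$; the two descriptions agree by Thom's formula $\Sq^s z=i_!\,w_s(\nu_Z)$.
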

\proof
It suffices to show that $\im\inc^*\cap \inc_0^*(V)=\{0\}.$ To see this, fix $i$ with 
$1\leq i\leq 2^n$ and observe that 
\begin{equation}
\label{improve-tori}
\im\inc_i^*\cap \left( \bigoplus_{j=1}^{n-1} \left(H^{j}(T^n)\otimes 
H^0(L_i)\right)\right)=\{0\}.
\end{equation}

Indeed, by the third item of Theorem \ref{totaro-basis} and Lemmas 3.1, 3.2 and 
3.3 in \cite{square}, every nonzero element of $\im(\inc^*_i)$ can be written as 
$$
w=\sum_{j,\ell}b^j\left(b^{\ell}v_\ell+b^{\ell-1}\Sq^1v_\ell+\cdots\right),
$$
where $v_\ell=u_\ell \otimes 1 \in H^\ell(T^n)\otimes H^0(L_i).$ 
Consequently, since for a torus the Cartan formula yields
$\Sq^m v_{\ell}=0$ for each $m,\ell\geq 1$, we conclude that every
$w\in\im\inc_i^{\geq 1}$ is divisible by $b$ and therefore belongs to
$\displaystyle H^*(T^n)\otimes H^{>0}(L_i)$, which proves the claim.

Since all groups are finite-dimensional vector spaces over $\FF_2$, the maps
$\mu_k$ and $\mu^k$ are dual and therefore have the same rank. This completes
the proof of the lemma.
\qed


\subsubsection{Proof of Theorem \ref{tori}}


By \cite[Theorem 1.1]{hypersurfaces}, the first assertion is immediate when $X$
is not Smith--Thom maximal. We may therefore assume that $X$ is maximal, and let
$n=\Dim_\CC X\geq 2$.

\smallskip

Let 
$$
E_n:=
\begin{cases}
\displaystyle
\sum_{k=0}^{\frac{n-1}{2}}\sum_{j=0}^{2k} \beta_j +
\sum_{k=1}^{\frac{n-1}{2}} \sum_{i=0}^{k-1}\beta_i(X(\RR))+\frac{\beta_{\rm odd}}{2},\, \text{if $n$ is odd}\\
\displaystyle
\sum_{k=1}^{\frac{n}{2}}\sum_{j=0}^{2k-1} \beta_j +\sum_{k=1}^{\frac{n}{2}} \sum_{i=0}^{k-1}\beta_i(X(\RR))
,\, \text{if $n$ is even}.
\end{cases}
$$

From \eqref{deficiency-odd_mu}, \eqref{deficiency-even_mu}, and 
Lemma \ref{better-estimate}, we have 
\begin{equation}
\label{refined-estimate}
\defi(X^{[2]})=4\left( \sum_{k=0}^{n-1}\rank(\mu_{k})-E_n\right)
\geq\, 4\left(\sum_{k=0}^{n-1}\rank(\inc^k)+\Dim \inc_0^*(V)-E_n\right).
\end{equation}
Notice now that
$$
\sum_{k=0}^{n-1}\rank(\inc^k)-E_n=\frac14D_n.
$$
Since the restriction of $ \inc_0^*$ to $V$ is injective, we find 
$$
\Dim \inc_0^*(V)=\Dim V= 2^n-2.
$$
From \eqref{refined-estimate} we find
\begin{equation}
\label{new-estimate}
\defi(X^{[2]})\geq D_n+4(2^n-2).
\end{equation}
Thus, the new estimate \eqref{new-estimate} gives the improved bounds 
$\defi(X^{[2]})\geq 4$ for $n=2,$ and  $\defi(X^{[2]})\geq 32$ for $n=4.$ 
Since $D_n>0$ for $n\neq2, 4,$ it follows that $\defi(X^{[2]})>0$ for every $n\geq 2.$ 
This proves the first assertion.

\smallskip

For the second assertion, let $(X_n,\Conj_n)$ be any sequence of maximal real
abelian varieties with $\Dim_\CC X_n=n$. Lemma
\ref{bound+monotonicity} implies that there exists $\epsilon>0$
\footnote{One can take $\epsilon=6$.} such that
$
\displaystyle
\defi(X_n^{[2]})\geq 2^{2n-\epsilon},
$
and so 
$$
\ln (\defi(X_n^{[2]}))\geq (2n-\epsilon)\ln 2.
$$
Therefore, we have $\displaystyle
\liminf_{n\to\infty} \frac{\ln (\defi(X_n^{[2]}))}{n}\geq 2\ln 2.
$
\qed


\subsection{Products of curves and surfaces}


As an example of the effectiveness of Theorem \ref{defect-mv}, we prove here 
Theorem \ref{product-not-max}.

\proof [Proof of Theorem \ref{product-not-max}] 

By \cite[Theorem 1.1]{hypersurfaces}, it remains only to consider the case in 
which both products are Smith--Thom maximal. This is 
equivalent to requiring  $X$, $Y$, and $C$ to be Smith--Thom maximal. 
Under these assumptions, since $\beta_1(X)=\beta_1(Y)=0$, we find  
\begin{align}
\label{beta2}
2\beta_0(X(\RR))+\beta_1(X(\RR))=&\,2+\beta_2(X),\notag\\
2\beta_0(Y(\RR))+\beta_1(Y(\RR))=&\,2+\beta_2(Y).
\end{align}
In particular, $Y(\RR)\neq \emptyset.$ Also, if $g$ is the genus of $C,$  
maximality gives $\beta_0(C(\RR))=\beta_1(C(\RR))=g+1.$ 

\smallskip

To prove the first item, we first apply the K\"unneth formula to find the relevant Betti 
numbers of $X\times C$ and its real locus:
$$
\begin{aligned}
\begin{cases}
\beta_0(X\times C)=1,\\
\beta_1(X\times C)=2g,\\
\beta_2(X\times C)=1+\beta_2(X),\\
\beta_3(X\times C)=2g\beta_2(X),
\end{cases}
\end{aligned}
$$
and
$$
\begin{aligned}
\begin{cases}
\beta_0((X\times C)(\RR))=\beta_0(X(\RR))(g+1),\\
\beta_1((X\times C)(\RR))=(\beta_0(X(\RR))+\beta_1(X(\RR)))(g+1).
\end{cases}
\end{aligned}
$$

The remaining Betti numbers are determined from the above lists by Poincar\'e duality. 
Applying Theorem \ref{defect-mv} together with \eqref{beta2} yields
\begin{align*}
\frac14 \defi\big((X\times C)^{[2]}\big)\geq  
&\, \sum_{k=0}^{1} \sum_{i=0}^{k}\beta_i((X\times C)(\RR))-
\sum_{l=0}^{1}\sum_{j=0}^{2l} \beta_j (X\times C)\\
&\, -\frac12\beta_{\rm odd}(X\times C)\\
= &\, (3\beta_0(X(\RR))+\beta_1(X(\RR)))(g+1)\\ 
& \, - (3+2g+\beta_2(X)) - (2g+g\beta_2(X))\\
=&\, \beta_0(X(\RR))(g+1) - 1-2g.
\end{align*}
Since $\beta_0(X(\RR))\geq2,$ it follows that $\defi((X\times C)^{[2]})>0.$ 

\smallskip

To prove the second item, we proceed in a similar manner. 
The K\"unneth formula gives the following 
Betti numbers of $X\times Y$ and its real locus:
$$
\begin{cases}
\beta_0(X\times Y)=1,\\
\beta_1(X\times Y)=0,\\
\beta_2(X\times Y)=\beta_2(X)+\beta_2(Y),\\
\beta_3(X\times Y)=0,\\
\beta_4(X\times Y)=2+\beta_2(X)\beta_2(Y),\\
\end{cases}
$$
and 
$$
\begin{cases}
\beta_0((X\times Y)(\RR))=\beta_0(X(\RR))\beta_0(Y(\RR)),\\
\beta_1((X\times Y)(\RR))=\beta_0(X(\RR))\beta_1(Y(\RR))
+\beta_1(X(\RR))\beta_0(Y(\RR)),\\
\beta_2((X\times Y)(\RR))=2\beta_0(X(\RR))\beta_0(Y(\RR))
+\beta_1(X(\RR))\beta_1(Y(\RR)),
\end{cases}
$$
while the remaining Betti numbers are determined by Poincar\'e duality.
Applying again Theorem \ref{defect-mv} together with \eqref{beta2} yields 
\begin{align*}
\frac14 \defi\big((X\times Y)^{[2]}\big)\geq  
&\, \sum_{k=1}^{2}\sum_{i=0}^{k-1}\beta_i((X\times Y)(\RR))
-\sum_{l=1}^{2}\sum_{j=0}^{2l-1} \beta_j(X\times Y)\\
= &\, (2\beta_0(X(\RR))\beta_0(Y(\RR))+\beta_0(X(\RR))\beta_1(Y(\RR))\\ 
& \, +\beta_1(X(\RR))\beta_0(Y(\RR))) - (2+\beta_2(X)+\beta_2(Y))\\
=&\, 2(\beta_0(X(\RR))-1)(\beta_0(Y(\RR))-1) \\
&\, +\beta_1(X(\RR))(\beta_0(Y(\RR))-1)+\beta_1(Y(\RR))(\beta_0(X(\RR))-1).
\end{align*}

By assumption, we have $\beta_0(X(\RR))\geq 2$. The above bound 
implies that $\defi\bigl((X\times Y)^{[2]}\bigr)>0$ if $\beta_0(Y(\RR))\geq 2$. 
If $\beta_0(Y(\RR))=1,$ then the maximality condition~\eqref{beta2} gives 
$\beta_1(Y(\RR))=\beta_2(Y).$

Since $Y$ is projective, its hyperplane class is nonzero in $H^2(Y,\QQ).$ 
Thus, $b_2(Y)>0$, and therefore $\beta_2(Y)>0$. 
Hence the lower bound is again strictly positive.
\qed


\bibliographystyle{alpha}

\end{document}